\newcommand{\angles}[1]{\langle #1 \rangle}
\newcommand{\half}{\frac{1}{2}}
\newcommand{\abs}[1]{\vert #1 \vert}
\newcommand{\norm}[1]{\left\Vert #1 \right\Vert}
\newcommand{\R}{\mathbb{R}}
\begin{document} 
\newtheorem{prop}{Proposition}[section]
\newtheorem{Def}{Definition}[section]
\newtheorem{theorem}{Theorem}[section]
\newtheorem{lemma}{Lemma}[section]
 \newtheorem{Cor}{Corollary}[section]

\title[Yang-Mills in Lorenz gauge]{\bf Low regularity well-posedness for the Yang-Mills system in Fourier-Lebesgue spaces}
\author[Hartmut Pecher]{
{\bf Hartmut Pecher}\\
Fakult\"at f\"ur  Mathematik und Naturwissenschaften\\
Bergische Universit\"at Wuppertal\\
Gau{\ss}str.  20\\
42119 Wuppertal\\
Germany\\
e-mail {\tt pecher@math.uni-wuppertal.de}}
\date{}

\begin{abstract}
The Cauchy problem for the Yang-Mills system in three space dimensions with data in Fourier-Lebesgue spaces $\widehat{H}^{s,r}$ , $1 < r \le 2$ , is shown to be locally well-posed, where we have to assume only almost optimal minimal regularity for the data with respect to scaling as $r \to 1$ . This is true despite of the fact that no null condition is known for one of the critical quadratic nonlinearities, which prevented by now the corresponding result in the classical case $r=2$ with data in standard Sobolev spaces.
\end{abstract}
\maketitle
\renewcommand{\thefootnote}{\fnsymbol{footnote}}
\footnotetext{\hspace{-1.5em}{\it 2010 Mathematics Subject Classification:} 
35Q40, 35L70 \\
{\it Key words and phrases:} Yang-Mills,  
local well-posedness, Lorenz gauge}
\normalsize 
\setcounter{section}{0}

\section{Introduction}

\noindent 
Let $\mathcal{G}$ be the Lie group $SO(n,\mathbb{R})$ (the group of orthogonal matrices of determinant 1) or $SU(n,\mathbb{C})$ (the group of unitary matrices of determinant 1) and $g$ its Lie algebra $so(n,\mathbb{R})$ (the algebra of trace-free skew symmetric matrices) or $su(n,\mathbb{C})$ (the algebra of trace-free skew hermitian matrices) with Lie bracket $[X,Y] = XY-YX$ (the matrix commutator). 
For given  $A_{\alpha}: \mathbb{R}^{1+n} \rightarrow g $ we define the curvature $F=F[A]$ by
\begin{equation}
\label{curv}
 F_{\alpha \beta} = \partial_{\alpha} A_{\beta} - \partial_{\beta} A_{\alpha} + [A_{\alpha},A_{\beta}] \, , 
\end{equation}
where $\alpha,\beta \in \{0,1,...,n\}$ and $D_{\alpha} = \partial_{\alpha} + [A_{\alpha}, \cdot \,]$ .

Then the Yang-Mills system is given by
\begin{equation}
\label{1}
D^{\alpha} F_{\alpha \beta}  = 0
\end{equation}
in Minkowski space $\mathbb{R}^{1+n} = \mathbb{R}_t \times \mathbb{R}^n_x$ , where $n \ge 3$, with metric $diag(-1,1,...,1)$. Greek indices run over $\{0,1,...,n\}$, Latin indices over $\{1,...,n\}$, and the usual summation convention is used.  
We use the notation $\partial_{\mu} = \frac{\partial}{\partial x_{\mu}}$, where we write $(x^0,x^1,...,x^n)=(t,x^1,...,x^n)$ and also $\partial_0 = \partial_t$.

Setting $\beta =0$ in (\ref{1}) we obtain the Gauss-law constraint
\begin{equation}
\nonumber
\partial^j F_{j 0} + [A^j,F_{j0} ]=0 \, .
\end{equation}

The total energy for Yang-Mills at time $t$  is given by
$$
  \mathcal E(t) =\sum_{0\le \alpha, \beta\le n} \int_{\R^n} \abs{F_{\alpha \beta}(t,x)}^2  \, dx,
$$
and is conserved for a smooth solution decaying sufficiently fast at spatial infinity. 
The  Yang-Mills system is invariant with respect to the scaling
$$ A_{\lambda}(t,x) = \lambda A(\lambda t,\lambda x) \quad, \quad F_{\lambda}(t,x)= \lambda^2 F(\lambda t,\lambda x) \, . $$
This implies
\begin{align*}
\|A_{\lambda}(0,\cdot)\|_{\dot{\widehat{H}}^{s,r}} = \lambda^{1+s-\frac{n}{r}} \|a_{\lambda}\|_{\dot{\widehat{H}}^{s,r}} \, , \\
\|F_{\lambda}(0,\cdot)\|_{\dot{\widehat{H}}^{l,r}} = \lambda^{2+l-\frac{n}{r}} \|f_{\lambda}\|_{\dot{\widehat{H}}^{l,r}}	\, .
\end{align*}
Here $\|u\|_{\widehat{H}^{s,r}} := \| \langle \xi \rangle^s \widehat{u}(\xi)\|_{L^{r'}}$ , where $r$ and $r'$ are dual exponents, and $\hat{\widehat{H}}^{s,r}$ denotes the homogeneous space.
Therefore the scaling critical exponent is $s = \frac{n}{r}-1$ for $A$ and $l= \frac{n}{r}-2$ for $F$ .

The system is gauge invariant. Given a sufficiently smooth function $U: {\mathbb R}^{1+n} \rightarrow \mathcal{G}$ we define the gauge transformation $T$ by $T A_0 = A_0'$ , 
$T(A_1,...,A_n) = (A_1',...,A_n'),$ where
\begin{align*}
A_{\alpha} & \longmapsto A_{\alpha}' = U A_{\alpha} U^{-1} - (\partial_{\alpha} U) U^{-1}  \, . 
\end{align*}

It is  well-known that if  $(A_0,...A_n)$ satisfies (\ref{curv}),(\ref{1}) so does $(A_0',...,A_n')$.

Hence we may impose a gauge condition. We exclusively study the case $n=3$ and Lorenz gauge $\partial^{\alpha}A_{\alpha} =0$. Other convenient gauges are the Coulomb gauge $\partial^j A_j=0$ and the temporal gauge $A_0 =0$.
Our aim is to obtain local well-posedness for data with minimal regularity. Up to now there exists no result for data arbitrarily close to the critical scaling regularity.

The classical case $r=2$ with data in standard Sobolev spaces was considered by Klainerman and Machedon \cite{KM}, who made the decisive detection that
the nonlinearity satisfies a so-called null condition, which enabled them to prove global well-posedness in temporal and in Coulomb gauge in energy space. The corresponding result in Lorenz gauge, where the Yang-Mills equations can be formulated as a system of nonlinear wave equations, was shown by Selberg and Tesfahun \cite{ST}, who discovered 
that also in this case some of the nonlinearities have a null structure. Tesfahun \cite{T} improved this result to data without finite energy, namely for $(A(0),(\partial_t A)(0)) \in H^s \times H^{s-1}$ and $(F(0),(\partial_t F)(0)) \in H^l \times H^{l-1}$ with $s = \frac{6}{7}+\epsilon$ and $l = -\frac{1}{14}+\epsilon$ for any $\epsilon > 0$ by discovering an additional partial null structure. A further improvement was achieved by the author \cite{P}, namely to $(s,l)=(\frac{5}{7}+\epsilon,-\frac{1}{7}+\epsilon)$ by modifying the solution spaces appropriately.

 As the critical case with respect to scaling is $(s,l)=(\half,-\half)$ , there is however still a gap, a phenomenon, which is also present in other gauges. The present paper closes this gap in the sense that as $r \to 1$ we almost reach the critical case $(s,l) = (2,1)$ .

Local well-posedness in energy space was also given by Oh \cite{O} using a new gauge, namely the Yang-Mills heat flow. He was also able to show that this solution can be globally extended \cite{O1}.  The Cauchy problem was also treated in higher space dimensions by several authors (\cite{KS},\cite{KT},\cite{KrT},\cite{KrSt},\cite{P1}).

In the present paper we treat the local well-posedness problem for the Yang-Mills system in Lorenz gauge and space dimension $n = 3$ in the case of data $(A(0),(\partial_t A)(0)) \in \widehat{H}^{s,r} \times \widehat{H}^{s-1,r}$ and $(F(0),(\partial_t F)(0)) \in \widehat{H}^{l,r} \times \widehat{H}^{l-1,r}$ in Fourier-Lebesgue spaces for $r \neq 2$, which coincide with the classical Sobolev spaces $H^s$ for $r=2$. The assumption is that $s = \frac{16}{7r} - \frac{2}{7}+ \delta$ and $ l =\frac{15}{7r}-\frac{8}{7}+ \delta $ , where any  $\delta > 0$ is admissible.  Thus we obtain $s \to 2+\delta$ and $l \to 1+\delta$ as $r \to 1$ , which is almost optimal with respect to scaling.

Such an approach was used by several authors already, starting with Vargas-Vega \cite{VV} for 1D Schr\"odinger equations. Gr\"unrock showed LWP for the modified KdV equation \cite{G}, a result which was improved by Gr\"unrock and Vega \cite{GV}. Gr\"unrock treated derivative nonlinear wave equations in 3+1 dimensions \cite{G1} and obtained an almost optimal result as $r \to 1$ with respect to scaling. Systems of nonlinear wave equations in the 2+1 dimensional case for nonlinearities which fulfill a null condition were considered by Grigoryan-Nahmod \cite{GN}. The latter two results are based on estimates by Foschi and Klainerman \cite{FK}.

In Chapter 2 we start by giving the formulation of the Yang-Mills equations as a system of semilinear wave equations and formulate the main theorem (Theorem \ref{Theorem1.1} and Cor. \ref{Cor}). In chapter 3 we recall some basic facts about our solution spaces and a general local well-posedness theorem for the Cauchy problem for systems of nonlinear wave equations with data in Fourier-Lebesgue spaces, which allows to reduce it to estimates for the nonlinearities. In chapter 4 we give the final formulation of the system in terms of null forms as far as possible. The bi-, tri- and quadrilinear estimates sufficient for the local well-posedness result are formulated, where we rely on Tesfahun's paper \cite{T}. In chapter 5 we prove bilinear estimates for the null forms and for general bilinear terms in generalized Bourgain-Klainerman-Machedon spaces $H^r_{s,b}$ (and $X^r_{s,b,\pm}$) based on elementary but non-trivial estimates by Foschi and Klainerman \cite{FK}, Gr\"unrock \cite{G} and Grigoryan-Nahmod \cite{GN}. In the final chapter 6 we prove the multilinear estimates formulated in chapter 4 by reduction to the bilinear estimates of chapter 5.

{\bf Acknowledgement:} I cordially thank the referees. Their reports helped very much to improve the paper and especially to fill in two gaps in it.

\section{Main results}
Expanding (\ref{1}) in terms of the gauge potentials $\left\{A_\alpha\right\}$, we obtain: 
 \begin{equation}\label{YM2}
  \square A_\beta = \partial_\beta\partial^\alpha A_\alpha- [\partial^\alpha A_\alpha, A_\beta] - [A^\alpha,\partial^\alpha A_\beta] - 
  [A^\alpha, F_{\alpha\beta}].
\end{equation}
If we now impose the Lorenz gauge condition,
the system \eqref{YM2} reduces to the nonlinear wave equation
 \begin{equation}\label{YM3}
  \square A_\beta = - [A^\alpha,\partial_\alpha A_\beta] -   [A^\alpha, F_{\alpha\beta}].
\end{equation}
In addition, regardless of the choice of gauge, $F$ satisfies the wave equation 
    \begin{equation}\label{YMF1}
 \begin{split}
   \square F_{\beta\gamma}&=-[A^\alpha,\partial_\alpha F_{\beta\gamma}] - \partial^\alpha[A_\alpha,F_{\beta\gamma}] - \left[A^\alpha,[A_\alpha,F_{\beta\gamma}]\right]
   \\
   & \quad - 2[F^{\alpha}_{{\;\;\,}\beta},F_{\gamma\alpha}] \, ,
   \end{split}
 \end{equation}
where we refer to \cite{ST}, chapter 3.2.

Expanding the second and fourth terms in \eqref{YMF1}, and also imposing the Lorenz gauge, yields 
\begin{equation}\label{YMF2}
\begin{split}
       \square F_{\beta\gamma}&= - 2[A^\alpha,\partial_\alpha F_{\beta\gamma}]
      + 2[\partial_\gamma A^\alpha, \partial_\alpha A_\beta]
      - 2[\partial_\beta A^\alpha, \partial_\alpha A_\gamma]
      \\
      &\quad + 2[\partial^\alpha A_\beta , \partial_\alpha A_\gamma]
                 + 2[\partial_\beta A^\alpha, \partial_\gamma A_\alpha] - [A^\alpha,[A_\alpha,F_{\beta\gamma}]] 
                 \\
                 &\quad  +2[F_{\alpha\beta},[A^\alpha,A_\gamma]]- 2[F_{\alpha\gamma},[A^\alpha,A_\beta]]
                      - 2[[A^\alpha,A_\beta],[A_\alpha,A_\gamma]] .  
                         \end{split}    
\end{equation}

 Note on the other hand by expanding the last term in the right hand side of \eqref{YM3}, we obtain 
 \begin{equation}\label{YM4}
  \square A_\beta = - 2[A^\alpha,\partial_\alpha A_\beta] + [A^\alpha,\partial_\beta A_\alpha] - 
  [A^\alpha, [A_\alpha,A_\beta]].
\end{equation}

We want to solve the system \eqref{YMF2}-\eqref{YM4} simultaneously for $A$ and $F$.
So to pose the Cauchy problem for this system, we consider initial data for $(A,F)$ at $t=0$:
\begin{equation}\label{Data-AF}
A(0) = a, \quad \partial_t A(0) = \dot a,
        \quad
    F(0) =f, \quad \partial_t F(0) = \dot f.
 \end{equation}

In fact, the initial data for $F$ can be determined from $(a, \dot a)$ as follows:
\begin{equation}\label{f}
\left\{
\begin{aligned}
  f_{ij} &= \partial_i a_j - \partial_j a_i + [a_i,a_j],
  \\
  f_{0i} &= \dot a_i - \partial_i a_0 + [a_0,a_i],
\\
  \dot f_{ij} &= \partial_i \dot a_j - \partial_j \dot a_i + [\dot a_i,a_j]+[ a_i, \dot a_j],
  \\
 \dot f_{0i} &= \partial^j f_{ji} +[a^\alpha, f_{\alpha i}] 
\end{aligned}
\right.
\end{equation}
where the first three expressions come from \eqref{curv} whereas 
the last one comes from (\ref{1}) with $\beta=i$.

Note that the Lorenz gauge condition $\partial^\alpha A_\alpha=0$ and (\ref{1}) with $\beta=0$ impose the constraints 
\begin{equation}\label{Const}
\dot a_0= \partial^i a_i,
\quad
   \partial^i f_{i0} + [a^i, f_{i0}] = 0 \, .
\end{equation}

Now we formulate our main theorem.
\begin{theorem}
\label{Theorem1.1}
Let $1 < r \le 2$ , $\delta > 0$. Assume that $s$ and $l$ satisfy the following conditions:
$$s = \frac{16}{7r} - \frac{2}{7}+ \delta\, ,  \quad l =\frac{15}{7r}-\frac{8}{7}+ \delta\, . $$
Given initial data $(a,\dot{a}) \in \widehat{H}^{s,r} \times \widehat{H}^{s-1,r}$ , $(f,\dot{f}) \in \widehat{H}^{l,r} \times \widehat{H}^{l-1,r}$ ,  there exists a time $T > 0$ , $T=T(\|a\|_{\widehat{H}^{s,r}},\|\dot{a}\|_{\widehat{H}^{s-1,r}} , \|f\|_{\widehat{H}^{l,r}} , \|\dot{f}\|_{\widehat{H}^{l-1,r}})$, such that the Cauchy problem (\ref{YMF2}),(\ref{YM4}),(\ref{Data-AF}) has a unique solution $A_{\mu}\in X^r_{s,b,+}[0,T]+ X^r_{s,b,-}[0,T]$ , $F \in X^r_{l,a,+}[0,T]+ X^r_{l,a,-}[0,T]$ (these spaces are defined in Def. \ref{Def.}). Here $a = \frac{1}{r}+$ and $b= \half + \frac{1}{2r}+$ . This solution has the regularity
$$ A_{\mu} \in C^0([0,T],\widehat{H}^{s,r}) \cap C^1([0,T],\widehat{H}^{s-1,r}) \, , \, F \in C^0([0,T],\widehat{H}^{l,r}) \cap C^1([0,T],\widehat{H}^{l-1,r}) \, . $$
The solution depends continuously on the data and persistence of higher regularity holds.
\end{theorem}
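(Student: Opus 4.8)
The plan is to solve the coupled system \eqref{YMF2}--\eqref{YM4} with data \eqref{Data-AF}, treating $A$ and $F$ as \emph{independent} unknowns (the compatibility of $F$ with the curvature $F[A]$ and the propagation of the gauge constraints being a separate matter dealt with in Cor.~\ref{Cor}), by a contraction mapping argument in the restriction spaces $X^r_{s,b,+}[0,T]+X^r_{s,b,-}[0,T]$ for the components $A_\mu$ and $X^r_{l,a,+}[0,T]+X^r_{l,a,-}[0,T]$ for $F$, with $a=\frac1r+$ and $b=\half+\frac1{2r}+$. Since $r>1$ forces $a,b>\frac1r$, the general local well-posedness theorem for systems of nonlinear wave equations recalled in Chapter~3 applies: the relevant energy and transfer estimates and the embedding $X^r_{s,b,\pm}\hookrightarrow C^0(\widehat H^{s,r})\cap C^1(\widehat H^{s-1,r})$ hold, so the whole problem is reduced to a finite list of multilinear estimates. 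Concretely, one must bound each nonlinear term $\mathcal N$ on the right of \eqref{YM4} in the norm dual to $X^r_{s,b,\pm}$ at regularity $s-1$ (weight $\angles{\xi}^{s-1}\angles{\abs\tau-\abs\xi}^{b-1}$ in $L^{r'}$), and each term on the right of \eqref{YMF2} likewise at regularity $l-1$ with weight exponent $a-1$, in each case by a product of the $X^r$-norms of $A$ and $F$ carrying a gain of a positive power $T^\theta$. Together with the corresponding estimates for differences of two solutions this produces the fixed point on a time interval depending only on the data norms, and uniqueness, persistence of higher regularity and continuous dependence follow from the abstract framework.

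The preparatory step (Chapter~4), following \cite{ST} and \cite{T}, is to rewrite the quadratic parts of \eqref{YMF2} and \eqref{YM4} in terms of the null forms $Q_0$ and $Q_{\mu\nu}$ as far as possible: using the Lorenz condition $\partial^\alpha A_\alpha=0$, the dangerous quadratic interactions reorganize into genuine null forms acting on the $\pm$ half-wave components of $A$, up to terms of lower differential order and up to terms reexpressed through $F$ (then controlled by the $F$-equation itself). The one quadratic nonlinearity for which no null structure is known --- the point stressed in the abstract, a product $\partial A\cdot\partial A$ in \eqref{YMF2} --- is carried along unstructured. The whole strategy rests on the fact that, as $r\to1$, the scaling-critical regularity descends to $(s,l)=(2,1)$, so the generic (non-null) bilinear estimate in $H^r_{s,b}$, valid for the indicated $(s,b,r)$ by the Foschi--Klainerman, Gr\"unrock and Grigoryan--Nahmod estimates adapted to the $L^{r'}$-based spaces (Chapter~5), already leaves enough room to absorb this term with no cancellation.

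With the bilinear toolkit of Chapter~5 in hand --- estimates for $Q_0(u,v)$, $Q_{\mu\nu}(u,v)$ and for unstructured products $u\,Dv$ and $Du\cdot Dv$ in $H^r_{s,b}$, obtained by the usual dyadic splitting into high-high$\to$low, high-low and low-high frequency regimes combined with the $L^{r'}$ restriction/convolution estimates for the cone --- the multilinear estimates are proved in Chapter~6 by peeling off one bilinear factor at a time. The cubic terms ($[A,\partial A]$ together with $[A,A]$, and $[A,[A,F]]$, $[F,[A,A]]$) and the quartic term $[[A,A],[A,A]]$ are strictly subcritical and follow by iterating the bilinear estimates together with product and Sobolev-type embeddings in Fourier--Lebesgue spaces, exploiting the derivatives one is free to distribute and the room left below scaling. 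Assembling all the estimates --- including their difference versions --- the map defined by Duhamel's formula on the product of the spaces above is, for $T$ small depending on the data, a contraction on a closed ball; its unique fixed point is the claimed solution, with the asserted regularity $C^0\widehat H^{s,r}\cap C^1\widehat H^{s-1,r}$ for $A_\mu$ and $C^0\widehat H^{l,r}\cap C^1\widehat H^{l-1,r}$ for $F$, and continuous dependence and persistence of regularity coming directly from the theorem of Chapter~3.

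I expect the main obstacle to be precisely the bilinear estimate for that non-null quadratic term at output regularity $l-1$ (respectively $s-1$), which sits only barely below scaling: here no null-form gain is available and the whole bound must be extracted from the $L^{r'}$ geometry of two interacting cones in the transversal, parallel and high-high-to-low configurations, which is exactly why the precise exponents $s=\frac{16}{7r}-\frac27+\delta$, $l=\frac{15}{7r}-\frac87+\delta$ and $b=\half+\frac1{2r}+$ are forced and why the Foschi--Klainerman and Gr\"unrock estimates must be used essentially sharply.
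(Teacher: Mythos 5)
Your proposal has the right skeleton---reduce to the abstract local well-posedness Theorem~\ref{Theorem0.3}, rewrite the nonlinearity in terms of null forms and reduce to the multilinear estimates (\ref{21})--(\ref{37}), prove bilinear estimates in the $L^{r'}$-based spaces, then peel off one bilinear factor at a time for the cubic and quartic terms---but it is missing the step that actually produces the stated exponents and the full range $1<r\le 2$: \emph{bilinear interpolation} with Tesfahun's result for $r=2$. The lemmas of Chapter~5 and their application in Chapter~6 are carried out only for $r$ close to $1$, with the provisional choice $2\ge s\ge 1+\frac1r$, $l=1$, $a=b=\frac1r+$. One then interpolates each of (\ref{21})--(\ref{37}) against the $r=2$ case, where Tesfahun proved them with $s=\frac67+\epsilon$, $l=-\frac1{14}+\epsilon$, $a=\half+$, $b=\frac34+$. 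The exponents $s=\frac{16}{7r}-\frac27+\delta$, $l=\frac{15}{7r}-\frac87+\delta$, $a=\frac1r+$, $b=\half+\frac1{2r}+$ in the theorem are precisely the straight-line interpolants in $\frac1r$ between these two endpoints; they are not ``forced'' by applying Foschi--Klainerman sharply at every fixed $r$. In fact the Chapter~5 lemmas by themselves do \emph{not} give those exponents at, say, $r=\frac32$: there $\frac2r\approx 1.33$ while $s\approx 1.24+\delta$, so even the basic null-form Lemma~\ref{Lemma1} with $\alpha_1+\alpha_2\ge\frac2r$ does not reach the stated regularity without interpolating against the $r=2$ case. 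Without the interpolation step your argument would only yield the theorem for $r$ in a small right-neighbourhood of $1$.

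A secondary inaccuracy: the bilinear estimates of Chapter~5 are not obtained by dyadic high--high/high--low/low--high splitting. They reduce, via the transfer principle (Prop.~\ref{Prop.0.1}), to sup bounds on weighted convolutions against $\delta(\tau\mp|\eta|\mp|\xi-\eta|)$, which are then evaluated directly using Foschi--Klainerman's Lemma~4.3, Lemma~4.4 and Prop.~4.5 in the various sign (elliptic vs.\ hyperbolic) cases, together with the hyperbolic Leibniz rule for distributing the $D_-^b$ weight (Lemma~\ref{Lemma6}). The relevant case distinction is over the sign combinations $\pm_1,\pm_2$ and the regimes $|\eta|+|\xi-\eta|\lessgtr 2|\xi|$, not over dyadic frequency blocks; and the non-null bilinear terms are handled by explicit Young/H\"older estimates (Lemma~\ref{Lemma5}) rather than cone geometry. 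Your proposal should reflect this, since the mechanism determines exactly which exponent constraints (e.g.\ $\alpha_1+\alpha_2>\frac3r$ vs.\ $\alpha_1+\alpha_2-\alpha_0\ge\frac2r$) appear.
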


\begin{Cor}
\label{Cor}
Let $s,r$ fulfill the assumptions of Theorem \ref{Theorem1.1}. Moreover assume that the initial data fulfill (\ref{f}) and (\ref{Const}). Given any $(a,\dot{a}) \in \widehat{H}^{s,r} \times \widehat{H}^{s-1,r}$ , there exista a time $T=T(\|a\|_{\widehat{H}^{s,r}},\|\dot{a}\|_{\widehat{H}^{s-1,r}} , \|f\|_{\widehat{H}^{l,r}} , \|\dot{f}\|_{\widehat{H}^{l-1,r}})$ , such that the solution $(A,F)$ of Theorem \ref{Theorem1.1} satisfies the Yang-Mills system (\ref{curv}),(\ref{1}) with Cauchy data $(a,\dot{a})$ and the Lorenz gauge condition $\partial^{\alpha} A_{\alpha} =0$ .
\end{Cor}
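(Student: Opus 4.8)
The plan is to deduce Corollary \ref{Cor} from Theorem \ref{Theorem1.1} by showing that, when the data satisfy the compatibility relations \eqref{f} and the constraints \eqref{Const}, the solution $(A,F)$ produced by Theorem \ref{Theorem1.1} automatically satisfies the original Yang-Mills system \eqref{curv}, \eqref{1} together with the Lorenz gauge condition. In other words, the passage from \eqref{YM2} to \eqref{YM3}, and from \eqref{YMF1} to \eqref{YMF2}, was only legitimate under the Lorenz gauge; so one must recover \emph{a posteriori} that $\partial^\alpha A_\alpha = 0$, that $F$ really is the curvature $F[A]$ of the solution $A$, and that \eqref{1} holds.

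First I would introduce the auxiliary quantities measuring the defects: set $w := \partial^\alpha A_\alpha$ (the Lorenz-gauge defect), $G_{\alpha\beta} := F_{\alpha\beta} - (\partial_\alpha A_\beta - \partial_\beta A_\alpha + [A_\alpha,A_\beta])$ (the curvature defect), and, if needed, the Yang-Mills defect $N_\beta := D^\alpha F_{\alpha\beta}$. Using the wave equations \eqref{YMF2} and \eqref{YM4} actually solved, together with the known second Bianchi-type identities, one derives a closed system of homogeneous linear (or at worst semilinear with no zeroth-order inhomogeneity) wave/transport equations satisfied by $(w, G, N)$, with coefficients depending on the already-controlled solution $(A,F)$. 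The compatibility conditions \eqref{f} guarantee that $G$ and its time derivative vanish at $t=0$; the constraints \eqref{Const} — the Lorenz condition $\dot a_0 = \partial^i a_i$ plus the Gauss law — together with the equations give that $w$, $\partial_t w$, and the remaining defect quantities vanish at $t=0$. A uniqueness argument for this homogeneous system in the relevant $X^r_{s,b,\pm}$-type spaces (which follows from the same contraction/energy estimates underlying Theorem \ref{Theorem1.1}, since the defect system is \emph{linear} in the defects) then forces $w \equiv 0$, $G \equiv 0$, $N \equiv 0$ on $[0,T]$, possibly after shrinking $T$.

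The key bookkeeping step is to verify that the wave equation \eqref{YMF2} for $F$ plus the wave equation \eqref{YM4} for $A$ are together \emph{equivalent}, on the set $\{w=0, G=0\}$, to the genuine Yang-Mills system — and more delicately, that even off that set the evolution of the defects closes. Concretely: subtracting \eqref{YM4} from the ungauged \eqref{YM2} shows $\square A_\beta$ differs from the Yang-Mills right-hand side by terms of the form $\partial_\beta w$ plus $[w, A_\beta]$ plus terms involving $G$; applying $\partial^\beta$ to \eqref{YM4} and commuting derivatives yields a wave equation $\square w = (\text{bilinear in } A, \partial A, w, G)$ with no constant term. Similarly one checks that $\square$ applied to $G_{\beta\gamma}$, using \eqref{YMF2} and the definition of $G$, produces only terms that are at least linear in $w$ or $G$. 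The main obstacle — and the place where the low regularity of the Fourier-Lebesgue data really bites — is making all these formal manipulations rigorous: the defect equations involve expressions like $\partial_\beta w$ and products $A \cdot \partial A$ which, at the regularity $s = \frac{16}{7r}-\frac27+\delta$, need exactly the bi- and trilinear estimates of Chapters 5 and 6 to be well-defined and to close the uniqueness argument. I would therefore reuse those multilinear estimates verbatim to run a fixed-point/uniqueness argument for $(w,G,N)$ in the same function spaces, concluding that the defects vanish and hence that $(A,F)$ solves \eqref{curv}, \eqref{1} in the Lorenz gauge, which is exactly the assertion of the Corollary.
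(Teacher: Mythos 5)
Your overall strategy — invoke Theorem \ref{Theorem1.1} to get a solution $(A,F)$ of the reformulated wave system, then show that the Lorenz-gauge defect $w=\partial^\alpha A_\alpha$, the curvature defect $G=F-F[A]$, and the Yang--Mills defect vanish by deriving a homogeneous system for them and propagating the vanishing of the initial data given by (\ref{f}) and (\ref{Const}) — is exactly the route taken in the literature, and it is what the paper has in mind. However, the paper does not carry out this argument at all: its entire proof of the Corollary consists of (i) noting that $(f,\dot f)$ defined by (\ref{f}) inherits the required $\widehat{H}^{l,r}\times\widehat{H}^{l-1,r}$ regularity from $(a,\dot a)$, so Theorem \ref{Theorem1.1} applies, and (ii) citing Selberg--Tesfahun \cite{ST}, Remark~2, for the statement that the constraints (\ref{f}) and (\ref{Const}) propagate and hence $(A,F)$ solves the genuine Yang--Mills system in Lorenz gauge. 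You have essentially reconstructed, in sketch form, the content of that cited remark; this is a legitimate and more self-contained presentation, whereas the paper simply delegates it to the reference.

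One point you omit is the short but non-vacuous verification in step (i): that the explicit formulas (\ref{f}) for $f_{ij}, f_{0i}, \dot f_{ij}, \dot f_{0i}$, involving $\partial a$, $\dot a$, and bilinear terms $[a,a]$, $[a,\dot a]$, $[a,f]$, actually land in $\widehat{H}^{l,r}\times\widehat{H}^{l-1,r}$ for the specific exponents $s,l$ of Theorem \ref{Theorem1.1}. Without this check you cannot even invoke Theorem \ref{Theorem1.1}, so the Corollary's conclusion would not follow. It is a brief product-estimate exercise (the paper dismisses it with ``as one easily checks''), but it belongs in the proof.
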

\begin{proof}[Proof of the Corollary]
 If $(a,\dot{a}) \in \widehat{H}^{s,r} \times \widehat{H}^{s-1,r}$ , then $(f,\dot{f})$ , defined by (\ref{f}), fulfill $(f,\dot{f}) \in \widehat{H}^{l,r} \times \widehat{H}^{l-1,r}$, as one easily checks. Thus we may apply Theorem \ref{Theorem1.1}.
The solution $(A,F)$ does not necessarily fulfill the Lorenz gauge condition and (\ref{curv}), i.e. $F=F[A]$ . If however the conditions (\ref{f}) and (\ref{Const}) are assumed then these properties are satisfied and $(A,F)$ is a solution of the Yang-Mills system (\ref{curv}),(\ref{1}) with Cauchy data $(a,\dot{a})$. This was shown in \cite{ST}, Remark 2. 
\end{proof}

Let us fix some notation.
We denote the Fourier transform with respect to space and time  by $\,\,\widehat{}\,$ . 
 $\Box = \partial_t^2 - \Delta$ is the d'Alembert operator,
$a\pm := a \pm \epsilon$ for a sufficiently small $\epsilon >0$ , and $\langle \,\cdot\, \rangle := (1+|\cdot|^2)^{\frac{1}{2}}$ . \\
Let $\Lambda^{\alpha}$
be the multiplier with symbol  $
\langle\xi \rangle^\alpha $ . Similarly let $D^{\alpha}$,
and $D_{-}^{\alpha}$ be the multipliers with symbols $
\abs{\xi}^\alpha$ and $\quad ||\tau|-|\xi||^\alpha$ ,
respectively.

\begin{Def}
\label{Def.}
Let $1\le r\le 2$ , $s,b \in \R$ . The wave-Sobolev spaces $H^r_{s,b}$ are the completion of the Schwarz space ${\mathcal S}(\R^{1+3})$ with norm
$$ \|u\|_{H^r_{s,b}} = \| \langle \xi \rangle^s \langle  |\tau| - |\xi| \rangle^b \widehat{u}(\tau,\xi) \|_{L^{r'}_{\tau \xi}} \, , $$ 
where $r'$ is the dual exponent to $r$.
We also define $H^r_{s,b}[0,T]$ as the space of the restrictions of functions in $H^r_{s,b}$ to $[0,T] \times \mathbb{R}^3$.  Similarly we define $X^r_{s,b,\pm} $ with norm  $$ \|\phi\|_{X^r_{s,b\pm}} := \| \langle \xi \rangle^s \langle \tau \pm |\xi| \rangle^b \tilde{\phi}(\tau,\xi)\|_{L^{r'}_{\tau \xi}} $$ and $X^r_{s,b,\pm}[0,T] $ .
$\dot{H}^r_{s,b}$ and $\dot{X}^r_{s,b,\pm} $ are the corresponding homogeneous spaces,  where $\langle \xi \rangle$ is replaced by $|\xi|$ .
\end{Def}

\section{Preliminaries}

We start by collecting some fundamental properties of the solution spaces. We rely on \cite{G}. The spaces $X^r_{s,b,\pm} $ with norm  $$ \|\phi\|_{X^r_{s,b\pm}} := \| \langle \xi \rangle^s \langle \tau \pm |\xi| \rangle^b \tilde{\phi}(\tau,\xi)\|_{L^{r'}_{\tau \xi}} $$ for $1<r<\infty$ are Banach spaces with ${\mathcal S}$ as a dense subspace. The dual space is $X^{r'}_{-s,-b,\pm}$ , where $\frac{1}{r} + \frac{1}{r'} = 1$. The complex interpolation space is given by
$$(X^{r_0}_{s_0,b_0,\pm} , X^{r_1}_{s_1,b_1,\pm})_{[\theta]} = X^r_{s,b,\pm} \, , $$
where $s=(1-\theta)s_0+\theta s_1$, $\frac{1}{r} = \frac{1-\theta}{r_0} + \frac{\theta}{r_1}$ , $b=(1-\theta)b_0 + \theta b_1$ . Similar properties has the space $H^r_{s,b}$ .\\
If $u=u_++u_-$, where $u_{\pm} \in X^r_{s,b,\pm} [0,T]$ , then $u \in C^0([0,T],\hat{H}^{s,r})$ , if $b > \frac{1}{r}$ .

The "transfer principle" in the following proposition, which is well-known in the case $r=2$, also holds for general $1<r<\infty$ (cf. \cite{GN}, Prop. A.2 or \cite{G}, Lemma 1). We denote $ \|u\|_{\hat{L}^p_t(\hat{L}^q_x)} := \|\tilde{u}\|_{L^{p'}_{\tau} (L^{q'}_{\xi})}$ .
\begin{prop}
\label{Prop.0.1}
Let $1 \le p,q \le \infty$ .
Assume that $T$ is a bilinear operator which fulfills
$$ \|T(e^{\pm_1 itD} f_1, e^{\pm_2itD} f_2)\|_{\hat{L}^p_t(\hat{L}^q_x)} \lesssim \|f_1\|_{\hat{H}^{s_1,r}} \|f_2\|_{\hat{H}^{s_2,r}}$$
for all combinations of signs $\pm_1,\pm_2$ , then for $b > \frac{1}{r}$ the following estimate holds:
$$ \|T(u_1,u_2)\|_{\hat{L}^p_t(\hat{L}^q_x)} \lesssim \|u_1\|_{H^r_{s_1,b}}  \|u_2\|_{H^r_{s_2,b}} \, . $$
\end{prop}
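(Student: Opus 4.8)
\textbf{Proof proposal for Proposition \ref{Prop.0.1}.} The plan is to reduce the bilinear estimate for $H^r_{s_1,b}\times H^r_{s_2,b}$ functions to the hypothesis on free solutions by decomposing each factor over the modulation variable. First I would write, for a general $u_j$, the Fourier inversion in the time variable along slices of constant modulation: setting $\lambda_j = |\tau_j|-|\xi_j|$ (so that $\tau_j = |\xi_j|+\lambda_j$ or $\tau_j = -|\xi_j|+\lambda_j$ depending on the sign of $\tau_j$), one has the representation
\begin{equation}
\nonumber
u_j(t,x) = c\sum_{\pm_j}\int_{\R} e^{\pm_j itD}\bigl(g_{j,\pm_j,\lambda_j}\bigr)(t,x)\, e^{it\lambda_j}\, d\lambda_j,
\end{equation}
where $g_{j,\pm_j,\lambda_j}$ is the function whose spatial Fourier transform is $\widehat{u_j}(\pm_j|\xi_j|+\lambda_j,\xi_j)$ restricted to the appropriate half of frequency space. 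The point is that $e^{it\lambda_j}$ is just a modulation by a fixed real number, which is invisible to the $\hat L^p_t(\hat L^q_x)$ norm, so each slice behaves exactly like a free solution with data $g_{j,\pm_j,\lambda_j}$.

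Next I would insert this representation for $u_1$ and $u_2$ into $T(u_1,u_2)$, use bilinearity to pull the two $\lambda$-integrals and the finite sums over signs outside, and apply Minkowski's integral inequality in $\hat L^p_t(\hat L^q_x)$ together with the hypothesis, giving
\begin{equation}
\nonumber
\|T(u_1,u_2)\|_{\hat L^p_t(\hat L^q_x)} \lesssim \sum_{\pm_1,\pm_2}\int\!\!\int \|g_{1,\pm_1,\lambda_1}\|_{\hat H^{s_1,r}}\,\|g_{2,\pm_2,\lambda_2}\|_{\hat H^{s_2,r}}\, d\lambda_1\, d\lambda_2.
\end{equation}
Now $\|g_{j,\pm_j,\lambda_j}\|_{\hat H^{s_j,r}} = \|\langle\xi_j\rangle^{s_j}\widehat{u_j}(\pm_j|\xi_j|+\lambda_j,\xi_j)\|_{L^{r'}_{\xi_j}}$, and the remaining $\lambda_j$-integral is an $L^1_{\lambda_j}$ norm of an $L^{r'}_{\xi_j}$ norm. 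The final step is to absorb this $L^1_{\lambda_j}L^{r'}_{\xi_j}$ bound into the $H^r_{s_j,b}$ norm, which is an $L^{r'}_{\lambda_j}L^{r'}_{\xi_j}$ norm weighted by $\langle\lambda_j\rangle^{b}$: by H\"older's inequality in $\lambda_j$ one has
\begin{equation}
\nonumber
\int_{\R}\|\langle\xi_j\rangle^{s_j}\widehat{u_j}(\pm_j|\xi_j|+\lambda_j,\cdot)\|_{L^{r'}_{\xi_j}}\,d\lambda_j \le \Bigl(\int_{\R}\langle\lambda_j\rangle^{-br}\,d\lambda_j\Bigr)^{1/r}\|u_j\|_{H^r_{s_j,b}},
\end{equation}
where I am using that $r'\ge 1$ lets me move the $\xi_j$-norm inside (by Minkowski again, or simply because for the purpose of the bound one first applies H\"older in $\lambda_j$ pointwise in $\xi_j$ and then Minkowski in $\xi_j$); the $\lambda_j$-integral is finite precisely because $b>\frac1r$ means $br>1$. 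Combining the two factors yields the claimed estimate.

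The main obstacle is bookkeeping rather than depth: one must be careful that the slicing representation and the exchange of the $\lambda$-integrals with the norms is rigorously justified (a density argument on $\mathcal S$, as afforded by the spaces having $\mathcal S$ dense, handles this), and that the splitting of frequency space according to $\operatorname{sgn}\tau_j$ does not cause trouble at $\tau_j=0$ or $\xi_j=0$ — a set of measure zero, hence harmless. I would also note that the same argument works verbatim with $H^r_{s,b}$ replaced by $X^r_{s,b,\pm}$, since there the modulation is simply $\tau\pm|\xi|$ and no sign splitting is needed at all. All of this is standard for $r=2$ and, as the references \cite{G,GN} confirm, the only place the exponent $r$ enters is the H\"older step, which works for every $1<r<\infty$ under the condition $b>\frac1r$.
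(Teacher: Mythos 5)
The paper does not actually prove Proposition \ref{Prop.0.1}; it cites \cite{GN}, Prop.\ A.2 and \cite{G}, Lemma 1. Your proof is the standard slicing argument that those references use, and it is essentially correct: the decomposition of $u_j$ into modulated free waves over the variable $\lambda_j$, the application of bilinearity and Minkowski's integral inequality, and the final H\"older step in $\lambda_j$ with the weight $\langle\lambda_j\rangle^{-b}$ (where $br>1$ guarantees integrability) are all exactly as one should do them.

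Two small remarks. First, the step where you declare the modulations $e^{it\lambda_j}$ to be ``invisible'' silently uses that $T$ commutes with time modulations, i.e.\ that $T(e^{it\lambda_1}u,e^{it\lambda_2}v)=e^{it(\lambda_1+\lambda_2)}T(u,v)$; only then does $\|T(e^{it\lambda_1}e^{\pm_1 itD}g_1,e^{it\lambda_2}e^{\pm_2itD}g_2)\|_{\hat L^p_t(\hat L^q_x)}$ reduce, via the $\tau$-translation invariance of $\hat L^p_t(\hat L^q_x)$, to the hypothesis on genuine free waves. This holds when $T$ is a bilinear Fourier multiplier whose symbol depends only on the spatial frequencies, which is the case for every operator to which the proposition is applied in the paper (the null forms $q_{ij},q_{0j},q_0$ and the products $\Pi(\cdot,\cdot)$ reduce to such multipliers on each sign sector), but it is an implicit hypothesis not literally contained in the statement; the same imprecision is present in the paper's formulation, so this is not a defect peculiar to your argument. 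Second, the concluding ``move the $\xi_j$-norm inside'' needs no Minkowski at all: after H\"older in $\lambda_j$ the dual factor is $\|\langle\lambda_j\rangle^b\langle\xi_j\rangle^{s_j}\widehat{u_j}(\pm_j|\xi_j|+\lambda_j,\xi_j)\|_{L^{r'}_{\lambda_j}L^{r'}_{\xi_j}}$, and $L^{r'}_{\lambda_j}L^{r'}_{\xi_j}=L^{r'}_{\lambda_j\xi_j}$ by Fubini, which after the change of variables $\tau_j=\pm_j|\xi_j|+\lambda_j$ (unit Jacobian, $\langle\lambda_j\rangle=\langle|\tau_j|-|\xi_j|\rangle$ on the sector $\operatorname{sgn}\tau_j=\pm_j$) is bounded by $\|u_j\|_{H^r_{s_j,b}}$. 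Your observation that the same argument gives the $X^r_{s,b,\pm}$ version directly, and that these norms dominate $H^r_{s,b}$ since $||\tau|-|\xi||\le|\tau\pm|\xi||$, is correct and is in fact the form used in Lemmas \ref{Lemma1}--\ref{Lemma6}.
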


The general local well-posedness theorem is the following (obvious generalization of)  \cite{G}, Thm. 1.
\begin{theorem}
\label{Theorem0.3}
Let $N_{\pm}(u,v):=N_{\pm}(u_+,u_-,v_+,v_-)$ and $M_{\pm}(u,v):=M_{\pm}(u_+,u_-,\\v_+,v_-)$ be multilinear functions.
Assume that for given $s,l \in \R$, $1 < r < \infty$ there exist $ b,a > \frac{1}{r}$ such that the estimates
$$ \|N_{\pm}(u,v)\|_{X^r_{s,b-1+,\pm}} \le \omega_1( \|u\|_{X^r_{s,b}},\|v\|_{X^r_{l,a}}) $$
and 
$$\|M_{\pm}(u,v)\|_{X^r_{l,a-1+,\pm}} \le \omega_2( \|u\|_{X^r_{s,b}},\|v\|_{X^r_{l,a}}) $$
are valid with nondecreasing functions $\omega_j$ , where $\|u\|_{X^r_{s,b}} := \|u_-\|_{X^r_{s,b,-}} + \|u_+\|_{X^r_{s,b,+}}$. Then there exist $T=T(\|u_{0_ {\pm}}\|_{\hat{H}^{s,r}},\|v_{0_{\pm}}\|_{\hat{H}^{l,r}})$ $>0$ and a unique solution $(u_+,u_-,\\v_+,v_-) \in X^r_{s,b,+}[0,T] \times X^r_{s,b,-}[0,T] \times X^r_{l,a,+}[0,T] \times X^r_{l,a,-}[0,T] $ of the Cauchy problem
$$ \partial_t u_{\pm} \pm i\Lambda u = N_{\pm}(u,v) \quad , \quad \partial_t v_{\pm} \pm i\Lambda v = M_{\pm}(u,v) $$ $$         u_{\pm}(0) = u_{0_{\pm}} \in \hat{H}^{s,r} \quad , \quad v_{\pm}(0) = v_{0_{\pm}} \in \hat{H}^{l,r}       \, . $$
 This solution is persistent and the mapping data upon solution $(u_{0+},u_{0-},v_{0+},v_{0-}) \\ \mapsto (u_+,u_-,v_+,v_-)$ , $\hat{H}^{s,r} \times \hat{H}^{s,r}\times \hat{H}^{l,r} \times \hat{H}^{l,r} \to X^r_{s,b,+}[0,T_0] \times X^r_{s,b,-}[0,T_0] \times X^r_{l,a,+}[0,T_0]\times X^r_{l,a,-}[0,T_0] $ is locally Lipschitz continuous for any $T_0 < T$.
\end{theorem}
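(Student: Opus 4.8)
The plan is to run the standard Bourgain-space contraction argument, essentially verbatim from \cite{G}, Thm.~1, the only novel features being the coupling of the two families of equations and the two distinct regularities $(s,l)$. First I would fix cutoffs $\chi\in C_0^\infty(\R)$ with $\chi\equiv 1$ near $[0,1]$ and $\psi\in C_0^\infty(\R)$ with $\psi\equiv 1$ on $[-1,1]$, set $\psi_T(t):=\psi(t/T)$, and rewrite the Cauchy problem for $0<T\le 1$ as the system of Duhamel integral equations
\[
 u_\pm(t)=\chi(t)\,e^{\mp it\Lambda}u_{0\pm}+\psi_T(t)\int_0^t e^{\mp i(t-t')\Lambda}N_\pm(u,v)(t')\,dt'
\]
together with the analogous equations for $v_\pm$ (with $M_\pm$, $l$, $a$ replacing $N_\pm$, $s$, $b$); the right-hand sides define a map $\Phi$ on the product space, which I would show to be a contraction on a suitable ball, with the fixed point finally restricted to $[0,T]$. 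Throughout one uses $\|u\|_{X^r_{s,b}}:=\|u_+\|_{X^r_{s,b,+}}+\|u_-\|_{X^r_{s,b,-}}$ and likewise for $v$ with $(l,a)$.

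The two linear inputs, both from \cite{G} (and relying on density of $\mathcal S$ in these spaces), are the $T$-independent homogeneous estimate $\|\chi\,e^{\mp it\Lambda}f\|_{X^r_{s,b,\pm}}\lesssim\|f\|_{\hat H^{s,r}}$, which is immediate since $\hat\chi$ is Schwartz, and the inhomogeneous estimate
\[
 \Big\|\psi_T\int_0^t e^{\mp i(t-t')\Lambda}F(t')\,dt'\Big\|_{X^r_{s,b,\pm}}\lesssim T^{\theta}\,\|F\|_{X^r_{s,b-1+,\pm}},\qquad\theta=\theta(r,b)>0,
\]
and the analogue with $a$ in place of $b$. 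The crucial positive power $T^\theta$ is the usual time-localization gain: the Duhamel integration recovers a full unit of the $\langle\tau\pm|\xi|\rangle$-weight while the nonlinear estimate only asks it to supply $1-\epsilon$ units (delivering $F$ at index $b-1+\epsilon$ rather than $b-1$), and the leftover $\epsilon$ converts into $T^\epsilon$; this survives for the Fourier-Lebesgue exponents $b,a>\tfrac1r$ at hand by the $r$-adapted lemmas of \cite{G}. Inserting the hypothesised estimates for $N_\pm,M_\pm$ one obtains, on the ball of radius $R:=2C_0\big(\sum_\pm\|u_{0\pm}\|_{\hat H^{s,r}}+\sum_\pm\|v_{0\pm}\|_{\hat H^{l,r}}\big)$, the bound $\|\Phi(u,v)\|\le\tfrac R2+T^{\theta}\big(\omega_1(R,R)+\omega_2(R,R)\big)$, so $\Phi$ maps this ball into itself once $T=T(\|u_{0\pm}\|_{\hat H^{s,r}},\|v_{0\pm}\|_{\hat H^{l,r}})$ is chosen small; since $N_\pm,M_\pm$ are multilinear, the underlying multilinear estimates bound the difference $\|\Phi(u,v)-\Phi(u',v')\|$ by a nondecreasing function of $R$ times $T^{\theta}\|(u,v)-(u',v')\|$ (the free parts cancelling), and a further shrinking of $T$ yields a contraction. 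Its unique fixed point is the desired solution, which belongs to $C^0$ of the respective data spaces by the embedding recalled just before Prop.~\ref{Prop.0.1}.

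Uniqueness of the solution is uniqueness of the fixed point; persistence of higher regularity follows by rerunning the argument on the same time interval with $(s,l)$ replaced by any larger pair (only the radius of the ball changes); and local Lipschitz dependence on $[0,T_0]$, $T_0<T$, follows by subtracting the Duhamel formulas for two data sets and combining the homogeneous estimate for the difference of the free evolutions with the now-contractive nonlinear part. The step most likely to require care — though it is already settled in \cite{G} — is the inhomogeneous estimate above in the range $b,a>\tfrac1r\ge\tfrac12$, where the familiar $L^2$-based time-localization lemmas must be replaced by their $r$-dependent analogues; beyond that I do not anticipate any genuine obstacle, since the whole analytic weight of the paper has been transferred into the multilinear estimates taken here as hypotheses.
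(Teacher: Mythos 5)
Your proposal correctly reproduces the standard Bourgain-space contraction argument (time-localized Duhamel formulation, Schwartz-cutoff homogeneous estimate, $T^\theta$-gain in the inhomogeneous estimate from $b,a>\tfrac1r$ and the $\epsilon$-slack in $b-1+$, multilinearity upgrading the $\omega_j$-bounds to product and hence difference estimates); this is exactly the route the paper itself outsources by calling Theorem~\ref{Theorem0.3} an ``obvious generalization of \cite{G}, Thm.~1.'' No genuine gap.
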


\section{Reformulation of the problem and null structure}
The reformulation of the Yang-Mills equations and the reduction of our main theorem to nonlinear estimates is completely taken over from Tesfahun \cite{T} (cf. also the fundamental paper by Selberg and Tesfahun \cite{ST}).

The standard null forms are given by
\begin{equation}\label{OrdNullforms}
\left\{
\begin{aligned}
Q_{0}(u,v)&=\partial_\alpha u \partial^\alpha v=-\partial_t u \partial_t v+\partial_i u \partial^j v,
\\
Q_{\alpha\beta}(u,v)&=\partial_\alpha u \partial_\beta v-\partial_\beta u \partial_\alpha v.
\end{aligned}
        \right.
\end{equation}
For $ g$-valued $u,v$, define a commutator version of null forms by 
\begin{equation}\label{CommutatorNullforms}
\left\{
\begin{aligned}
  Q_0[u,v] &= [\partial_\alpha u, \partial^\alpha v] = Q_0(u,v) - Q_0(v,u),
  \\
  Q_{\alpha\beta}[u,v] &= [\partial_\alpha u, \partial_\beta v] - [\partial_\beta u, \partial_\alpha v] = Q_{\alpha\beta}(u,v) + Q_{\alpha\beta}(v,u).
\end{aligned}
\right.
\end{equation}

 Note the identity
\begin{equation}\label{NullformTrick}
  [\partial_\alpha u, \partial_\beta u]
  = \frac12 \left( [\partial_\alpha u, \partial_\beta u] - [\partial_\beta u, \partial_\alpha u] \right)
  = \frac12 Q_{\alpha\beta}[u,u].
\end{equation}

Define 
\begin{equation}\label{NewNull} 
  \mathcal{Q}[u,v] = - \frac12 \varepsilon^{ijk}\varepsilon_{klm} Q_{ij}\left[R^l u^m, v \right]
  - Q_{0i}\left[R^i u_0, v \right],
\end{equation}
where $\varepsilon_{ijk}$ is the antisymmetric symbol with $\varepsilon_{123} = 1$ and
$R_i = \Lambda^{-1}\partial_i $ are the Riesz transforms.

Now we refer to Tesfahun \cite{T}, who showed that the system (\ref{YMF2}),(\ref{YM4}) in Lorenz gauge can be written in the following form
\begin{equation}\label{AF}
\begin{aligned}
  \square A_\beta &=  \mathcal M_\beta(A,\partial_t A,F,\partial_t F),
  \\
  \square F_{\beta\gamma} &=  \mathcal N_{\beta\gamma}(A,\partial_t A,F,\partial_t F),
\end{aligned}
\end{equation}
where
\begin{align*}
  \mathcal M_\beta(A,\partial_t A,F,\partial_t F) &= -2 \mathcal Q[\Lambda^{-1} A,A_\beta] +
  \sum_{i=1}^4\Gamma^i_\beta(A, \partial A, F, \partial F)-2[\Lambda^{-2}  A^\alpha, \partial_\alpha A_\beta ]
  \\
 &\quad  - [A^\alpha, [A_\alpha, A_\beta]],
   \end{align*}

  \begin{align*}
  \mathcal N_{ij}(A,\partial_t A,F,\partial_t F)
  = &- 2\mathcal Q[\Lambda^{-1} A,F_{ij}]
  + 2\mathcal Q[\Lambda^{-1} \partial_j A, A_i]- 2\mathcal Q[\Lambda^{-1} \partial_i A, A_j] 
  \\
  & + 2Q_0[A_i , A_j]
  + Q_{ij}[A^\alpha,A_\alpha]-2[\Lambda^{-2}  A^\alpha, \partial_\alpha F_{ij} ]
  \\
  &+2[\Lambda^{-2}  \partial_jA^\alpha, \partial_\alpha A_{i} ]-2[\Lambda^{-2}  \partial_i A^\alpha, \partial_\alpha A_{j} ]
  \\
  & - [A^\alpha,[A_\alpha,F_{ij}]] + 2[F_{\alpha i},[A^\alpha,A_j]] - 2[F_{\alpha j},[A^\alpha,A_i]]
  \\
  & - 2[[A^\alpha,A_i],[A_\alpha,A_j]],
  \end{align*} 
  
  \begin{align*}
  \mathcal N_{0i}(A,\partial_t A,F,\partial_t F)
  = &- 2\mathcal Q[\Lambda^{-1} A,F_{0i}]
  + 2\mathcal Q[\Lambda^{-1} \partial_i A, A_0]- 2 Q_{0j}[A^j,A_i] \\
	&+ 2Q_0[A_0 , A_i]
  + Q_{0i}[A^\alpha,A_\alpha]-2[\Lambda^{-2}  A^\alpha, \partial_\alpha F_{0i} ]\\
	&+2[\Lambda^{-2}  \partial_i A^\alpha, \partial_\alpha A_{0} ]
   - [A^\alpha,[A_\alpha,F_{0i}]] + 2[F_{\alpha 0},[A^\alpha,A_i]] \\ 
	&- 2[F_{\alpha i},[A^\alpha,A_0]]
   - 2[[A^\alpha,A_0],[A_\alpha,A_i]]
  \end{align*}
  where
\begin{equation}\label{Gammas}
\begin{aligned}
\Gamma^1_\beta(A, \partial A,  F, \partial F)
&=-[A_0,\partial_\beta A_0] + 
[\Lambda^{-1} R_j (\partial_t A_0),  \Lambda^{-1} R^j  \partial_t (\partial_\beta A_0)] ,
\\
\Gamma^2_\beta(A, \partial A,  F, \partial F)
&= -\frac12 \varepsilon^{ijk} 
\varepsilon_{klm} \Big\{ Q_{ij}[\Lambda^{-1}  R^n \mathbf A_n ,\Lambda^{-1} R^l \partial_\beta\mathbf A^m ] \\
&\hspace{6em}+ 
Q_{ij}[\Lambda^{-1}  R^n \partial_\beta \mathbf A_n , \Lambda^{-1}  R^l \mathbf A^m ]
 \Big\},
\\
\Gamma^3_\beta(A, \partial A,  F, \partial F)&=[ \Lambda^{-2} \nabla \times \mathbf F,
\Lambda^{-2} \nabla \times \mathbf \partial_\beta \mathbf F]
\\
&\quad
-[\Lambda^{-2}\nabla \times \mathbf F , 
\Lambda^{-2} \partial_\beta \nabla \times( \mathbf A \times \mathbf A)]
\\
& \quad- [ \Lambda^{-2} \nabla \times( \mathbf A \times \mathbf A),
\Lambda^{-2}\nabla \times \mathbf \partial_\beta \mathbf F ] 
\\
&\quad+ [\Lambda^{-2}\nabla \times( \mathbf A \times \mathbf A),
\Lambda^{-2}\partial_\beta \nabla \times( \mathbf A \times \mathbf A) ],
\\
\Gamma^4_\beta(A, \partial A,  F, \partial F)&=[\mathbf A^{\text{cf}} + \mathbf A^{\text{df}}, \Lambda^{-2} \partial_\beta \mathbf A] 
+ [\Lambda^{-2}\mathbf A,  \partial_\beta \mathbf A  ].
 \end{aligned}
\end{equation}
Here $\mathbf F = (F_1,F_2,F_3)$ , where $F_i = \sum_{j<k \,,\, j,k \neq i} F_{jk}$ ,
$(\nabla \times \mathbf A)_i = \epsilon_{ijk} \partial^j A^k$ and $(\mathbf A \times \mathbf B)_k = \epsilon_{ijk} A^i B^j$ .

Here especially the
splitting of the spatial part $\mathbf A=(A_1,A_2, A_3)$ of the potential into divergence-free and curl-free parts and a smoother part is used
\begin{equation}\label{SplitA}  
\mathbf A = \mathbf A^{\text{df}} + \mathbf A^{\text{cf}} + \Lambda^{-2} \mathbf A,
\end{equation}
where
\begin{align*}
  \mathbf A^{\text{df}}&=  \Lambda^{-2} \nabla \times \nabla \times \mathbf A,
  \\
  \mathbf A^{\text{cf}}&= -\Lambda^{-2} \nabla (\nabla \cdot \mathbf A).
\end{align*}

In a standard way we rewrite the system (\ref{AF}) as a first order (in t) system. Defining
$A_{\pm} = \half(A \pm (i\Lambda)^{-1} \partial_t A) \quad , \quad F_{\pm} = \half(F \pm (i \Lambda)^{-1}\partial_t F) $ , so that $A=A_++A_-$ , $\partial_t A= i \Lambda(A_+-A_-)$ , $F=F_++F_-$ , $\partial_t F = i \Lambda (F_+-F_-)$ the system transforms to
\begin{align}
(i \partial_t \pm \Lambda)A^{\beta}_{\pm} & = -A^{\beta} \mp(2 \Lambda)^{-1} \mathcal M_\beta(A,\partial_t A,F,\partial_t F)\, ,\\
(i \partial_t \pm \Lambda)F^{\beta \gamma}_{\pm} & = -F^{\beta \gamma} \mp(2 \Lambda)^{-1} \mathcal N_{\beta\gamma}(A,\partial_t A,F,\partial_t F) \, .
\end{align}
The initial data transform to 
$$A_{\pm}(0)= \half(a \pm (i\Lambda)^{-1} \dot{a}) \in 
\widehat{H}^{s,r} \quad , \quad F_{\mp}(0)= \half(f \pm (i\Lambda)^{-1} \dot{f}) \in \widehat{H}^{l,r} \, .$$

Now, looking at the terms in $ \mathcal{M}_{\beta}$ and $ \mathcal{N}_{\beta \gamma}$ and noting the fact that the Riesz transforms 
$R_i$ are bounded in the spaces involved, the estimates in Theorem \ref{Theorem0.3} 
reduce to proving:\\
1. the estimates for the null forms $Q_{ij}$ , $Q_0$ and $Q \in \{Q_{0i},Q_{ij}\}$ :
\begin{align}
  \label{21}
  \norm{ Q[\Lambda^{-1} A, A]}_{H^r_{s-1,b-1+}}
  &\lesssim \|A\|_{X^r_{s,b}} \|A\|_{X^r_{s,b}},
  \\
    \label{22}
  \norm{  Q_{ij}[\Lambda^{-1} A, \Lambda^{-1} \partial A]}_{H^r_{s-1,b-1+}}
  &\lesssim  \|A\|_{X^r_{s,b}} \|A\|_{X^r_{s,b}} ,
  \\
 \label{23}
  \norm{ Q[\Lambda^{-1}A, F]}_{H^r_{l-1,a-1+} }
  &\lesssim \|A\|_{X^r_{s,b}}  \|F\|_{X^r_{l,a}},
    \\
  \label{24}
  \norm{  Q[ A,   A]}_{H^r_{l-1,a-1+} }
  &\lesssim \|A\|_{X^r_{s,b}} \|A\|_{X^r_{s,b}} ,\\
	\label{25}
  \norm{ Q_0[ A,   A]}_{H^r_{l-1,a-1+} }
  &\lesssim \|A\|_{X^r_{s,b}}  \|A\|_{X^r_{s,b}} ,
	\end{align} 
the following estimate for $\Gamma^1$ and other bilinear terms
\begin{align}
  \label{26}
  \norm{\Gamma^1( A, \partial A)}_{H^r_{s-1,b-1+}}
  &\lesssim\|A\|_{X^r_{s,b}} \|A\|_{X^r_{s,b}} ,
  \\
  \label{27} 
   \norm{\Pi( A, \Lambda^{-2} \partial A  ) }_{H^r_{s-1,b-1+}}
     &\lesssim \|A\|_{X^r_{s,b}} \|A\|_{X^r_{s,b}} ,
     \\
     \label{28} 
   \norm{ \Pi( \Lambda^{-2} A,  \partial A)   }_{H^r_{s-1,b-1-+}}
     &\lesssim \|A\|_{X^r_{s,b}} \|A\|_{X^r_{s,b}} ,
     \\
  \label{29} 
  \norm{\Pi(\Lambda^{-1} F, \Lambda^{-1} \partial F  ) }_{H^r_{s-1,b-1+}}
     &\lesssim \|F\|_{X^r_{l,a}} \|F\|_{X^r_{l,a}},
     \\
       \label{30} 
   \norm{\Pi( \Lambda^{-2} A,  \partial F)   }_{H^r_{l-1,a-1+}}
     &\lesssim \|A\|_{X^r_{s,b}} \|F\|_{X^r_{l,a}},
     \\
       \label{31} 
   \norm{ \Pi( \Lambda^{-1} A,  \partial A)   }_{H^r_{l-1,a-1+}}
     &\lesssim \|A\|_{X^r_{s,b}} \|A\|_{X^r_{s,b}}
     \end{align}
and\\
2. the following trilinear and quadrilinear estimates:
 \begin{align}
   \label{32}
   \norm{\Pi(\Lambda^{-1} F,\Lambda^{-1} \partial( AA) )}_{H^r_{s-1,b-1+}}
  &\lesssim  \|F\|_{X^r_{l,a}} \|A\|_{X^r_{s,b}} \|A\|_{X^r_{s,b}} ,
  \\
   \label{33}
   \norm{\Pi(\Lambda^{-1}\partial F, \Lambda^{-1}  ( AA) )}_{H^r_{s-1,b-1+}}
  &\lesssim  \|F\|_{X^r_{l,a}} \|A\|_{X^r_{s,b}} \|A\|_{X^r_{s,b}} ,
  \\
  \label{34}
   \norm{\Pi(\Lambda^{-1}(AA), \Lambda^{-1} \partial ( AA)) }_{H^r_{s-1,b-1+}}
  &\lesssim  \|A\|_{X^r_{s,b}} \|A\|_{X^r_{s,b}} \|A\|_{X^r_{s,b}} \|A\|_{X^r_{s,b}} ,
  \\
   \label{35} 
  \norm{\Pi(A,A,A)}_{H^r_{s-1,b-1+}}
  &\lesssim\|A\|_{X^r_{s,b}} \|A\|_{X^r_{s,b}} \|A\|_{X^r_{s,b}},
  \\
   \label{36}
  \norm{\Pi(A, A, F)}_{H^r_{l-1,a-1+}}
  &\lesssim \|A\|_{X^r_{s,b}} 
  \|A\|_{X^r_{s,b}}\|F\|_{X^r_{l,a}},
  \\
   \label{37}
  \norm{\Pi(A,A, A, A)}_{H^r_{l-1,a-1+}}
  &\lesssim  \|A\|_{X^r_{s,b}} \|A\|_{X^r_{s,b}} \|A\|_{X^r_{s,b}} \|A\|_{X^r_{s,b}} \, .
     \end{align}
$\Pi(\cdots)$ denotes a multilinear operator in its arguments and
$\|u\|_{X^r_{s,b}} := \|u_-\|_{X^r_{s,b,-}} + \|u_+\|_{X^r_{s,b,+}}$ .

The matrix commutator null forms are linear combinations of the ordinary ones, 
in view of \eqref{CommutatorNullforms}. Since the matrix 
structure plays no role in the estimates under consideration, 
we reduce (\ref{21})--(\ref{25}) to estimates of the ordinary null forms for $\mathbb C$-valued 
functions $u$ and $v$ (as in \eqref{OrdNullforms}).

Next we consider the term $\Gamma_{\beta}^1$ . We may ignore its matrix form and treat 
$$\Gamma^1_k(A_0,, \partial_k A_0)
=-A_0 (\partial_k A_0) + 
\Lambda^{-1} R_j (\partial_t A_0)  \Lambda^{-1} R^j  \partial_t (\partial_k A_0))$$
for $k=1,2,3$ and
\begin{align*}
\Gamma^1_0(A_0,, \partial^i A_i)
&=-A_0 (\partial_0 A_0) + 
\Lambda^{-1} R_j (\partial_t A_0)  \Lambda^{-1} R^j  \partial_t (\partial_0 A_0)) \\
& = -A_0 (\partial^i A_i) + 
\Lambda^{-1} R_j (\partial_t A_0)  \Lambda^{-1} R^j  \partial_t (\partial^i A_i)) \,,
\end{align*}
where we used the Lorenz gauge $\partial_0 A_0 = \partial^i A_i$ in the last line in order to eliminate one time derivative. Thus we have to consider
$$ \Gamma^1(u,v) = -uv + \Lambda^{-1} R_j (\partial_t u) \Lambda^{-1} R^j(\partial_t v) \, , $$
where $u=A_0$ and $v=\partial^i A_i$ or $v=\partial_k A_0$ .

Next we show that $\Gamma^1$ also has a null structure.
The proof of the following lemma was essentially given by Tesfahun \cite{T}. In fact the detection of this null structure was the main progress of his paper over Selberg-Tesfahun \cite{ST}.  
\begin{lemma}
\label{Lemma2.1}
Let $q_{\mu \nu}(u,v) := Q_{\mu \nu}(D^{-1}u,D^{-1}v)$ , $q_0(u,v) := Q_0(D^{-1}u,D^{-1}v)$ .
The following estimate holds:
\begin{align}
\label{45'}
\Gamma^1(u,v) &\precsim \sum_{i,j} q_{ij}(u,v) + q_0(u,v) + (\Lambda^{-2}u)v + u(\Lambda^{-2}v) \, .  
\end{align}
Here $u \preceq v$ means $\abs{\widehat u} \lesssim \abs{\widehat v}$ .
\end{lemma}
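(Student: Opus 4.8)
The plan is to verify the pointwise Fourier-domain bound \eqref{45'} by analyzing the symbol of $\Gamma^1(u,v) = -uv + \Lambda^{-1}R_j(\partial_t u)\,\Lambda^{-1}R^j(\partial_t v)$ directly. Writing $u = e^{\pm_1 it|\xi|}\hat u_0$, $v = e^{\pm_2 it|\eta|}\hat v_0$ type building blocks (or, more cleanly, just passing to the spacetime Fourier transform), the operator $\Lambda^{-1}R_j\partial_t$ acting on a function with spacetime frequency $(\tau,\xi)$ has symbol $\tau\xi_j/(|\xi|\langle\xi\rangle)$. Hence, if the two inputs carry spacetime frequencies $(\tau_1,\xi_1)$ and $(\tau_2,\xi_2)$, the symbol of $\Gamma^1$ evaluated on the product is
$$ -1 + \frac{\tau_1\tau_2\,(\xi_1\cdot\xi_2)}{|\xi_1||\xi_2|\langle\xi_1\rangle\langle\xi_2\rangle}. $$

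The first step is to compare this with the symbols of the null forms $q_{ij}$ and $q_0$. The symbol of $Q_{ij}(D^{-1}\cdot,D^{-1}\cdot)$ on the product is $(\xi_{1,i}\xi_{2,j}-\xi_{1,j}\xi_{2,i})/(|\xi_1||\xi_2|)$, and summing squares gives $|\xi_1|^2|\xi_2|^2 - (\xi_1\cdot\xi_2)^2$ over $|\xi_1|^2|\xi_2|^2$, i.e. $1 - \cos^2\theta$ where $\theta = \angle(\xi_1,\xi_2)$; the symbol of $q_0$ is $(-\tau_1\tau_2 + \xi_1\cdot\xi_2)/(|\xi_1||\xi_2|)$. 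The key algebraic step is to split the $\Gamma^1$ symbol into a part controlled by $q_0$ and a part controlled by the $q_{ij}$'s plus the harmless smoothing remainders. Concretely I would write
$$ -1 + \frac{\tau_1\tau_2(\xi_1\cdot\xi_2)}{|\xi_1||\xi_2|\langle\xi_1\rangle\langle\xi_2\rangle}
= \left(-1 + \frac{\xi_1\cdot\xi_2}{|\xi_1||\xi_2|}\right) + \left(\frac{\tau_1\tau_2}{\langle\xi_1\rangle\langle\xi_2\rangle}-1\right)\frac{\xi_1\cdot\xi_2}{|\xi_1||\xi_2|}, $$
where I have inserted and subtracted $\xi_1\cdot\xi_2/(|\xi_1||\xi_2|)$. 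The first bracket, using $-1 = \pm_1\pm_2\cdot(\mp_1\mp_2)$ bookkeeping or simply $-\tau_1\tau_2/(|\xi_1||\xi_2|)$ replaced by $-1$ up to terms measuring $||\tau_i|-|\xi_i||$, is exactly $q_0$'s symbol modulo error terms of the form $\langle|\tau|-|\xi|\rangle/\langle\xi\rangle$, which are absorbed into lower-order (more regular) contributions; and $-1 + \cos\theta = -\tfrac12(1-\cos\theta)(1+\cos\theta)/(1) \lesssim 1-\cos\theta \lesssim \sum_{ij}|q_{ij}\text{-symbol}|^2$-type bound up to constants — here one uses $|{-1+\cos\theta}| \le |1-\cos\theta|$ and $1-\cos^2\theta = (1-\cos\theta)(1+\cos\theta) \ge 1-\cos\theta$ when $\cos\theta\ge 0$, while for $\cos\theta < 0$ the symbol $-1+\cos\theta$ is bounded, hence controlled by $1-\cos^2\theta$ up to a constant. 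For the second bracket, $\left|\tfrac{\tau_1\tau_2}{\langle\xi_1\rangle\langle\xi_2\rangle}-1\right|$ is bounded by a constant (since $|\tau_i|\lesssim\langle\xi_i\rangle$ on the relevant region is not assumed, so one instead writes $\tau_i = \pm_i|\xi_i| + O(||\tau_i|-|\xi_i||)$ and extracts the elliptic gain $\langle\xi_i\rangle^{-1}|\xi_i| \le 1$), and multiplying by $|\cos\theta|\le 1$ leaves a bounded symbol times a genuine smoothing factor $\Lambda^{-2}$ on one of the arguments — precisely the $(\Lambda^{-2}u)v + u(\Lambda^{-2}v)$ terms.

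I expect the main obstacle to be the careful handling of the time-frequency variables $\tau_i$: the symbol manipulation above must be done so that every time one replaces $\tau_i$ by $\pm_i|\xi_i|$ the resulting error carries a factor $\langle|\tau_i|-|\xi_i|\rangle$, which in the $H^r_{s,b}$-scale costs at most one power of the weight and is therefore dominated by the right-hand side of \eqref{45'} after one notes that $b-1+ < b$ with room to spare (this is exactly the mechanism by which Tesfahun's argument works, and the excerpt explicitly attributes the null structure to \cite{T}). A clean way to organize this is to treat the four sign combinations $(\pm_1,\pm_2)$ separately and, in each, use the identity $\tau_1\tau_2 = (\pm_1|\xi_1|)(\pm_2|\xi_2|) + (\tau_1\mp_1|\xi_1|)\tau_2 + (\pm_1|\xi_1|)(\tau_2\mp_2|\xi_2|)$ so that the leading term reproduces $|\xi_1||\xi_2|$ times $\pm_1\pm_2$ and the remainders are manifestly of the claimed smoothing/null type; then \eqref{45'} follows by collecting terms and using $|\widehat{fg}| \lesssim |\widehat f|*|\widehat g|$ monotonicity of the symbols. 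No multilinear estimate is needed here — the lemma is purely a pointwise symbol comparison — so the whole proof is a bounded-number-of-lines verification once the decomposition is written down.
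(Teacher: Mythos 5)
The decomposition you propose is a genuinely different splitting of the $\Gamma^1$ symbol from the one in the paper, and it has a gap that the paper's splitting is specifically designed to avoid. Write $\cos\theta = \frac{\xi_1\cdot\xi_2}{|\xi_1||\xi_2|}$. Your split is
\[
-1 + \frac{\tau_1\tau_2(\xi_1\cdot\xi_2)}{\langle\xi_1\rangle^2\langle\xi_2\rangle^2}
= \bigl(-1 + \cos\theta\bigr) + \Bigl(\frac{\tau_1\tau_2}{\langle\xi_1\rangle\langle\xi_2\rangle}-1\Bigr)\cos\theta ,
\]
and you claim (i) the first bracket is dominated by $\sin^2\theta$ (the $q_{ij}$ symbols), and (ii) the second bracket is a bounded symbol times a $\Lambda^{-2}$ smoothing gain. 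Both claims fail on the opposite-sign, nearly antiparallel region. Concretely, take $\tau_1\approx|\xi_1|$, $\tau_2\approx-|\xi_2|$ and $\xi_1\cdot\xi_2\approx -|\xi_1||\xi_2|$ with $|\xi_1|,|\xi_2|$ large. Then $-1+\cos\theta\approx -2$ while $\sin^2\theta\approx 0$, so ``for $\cos\theta<0$ the symbol is bounded, hence controlled by $1-\cos^2\theta$ up to a constant'' is a non sequitur: boundedness does not give domination by a vanishing quantity. Simultaneously $\tau_1\tau_2/\langle\xi_1\rangle\langle\xi_2\rangle - 1\approx -2$, a pure $O(1)$ symbol rather than an $O(\langle\xi_i\rangle^{-2})$ smoothing factor, so the second bracket is not of the form $(\Lambda^{-2}u)v+u(\Lambda^{-2}v)$. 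In this regime the two brackets are each of order one and cancel against each other ($\Gamma^1$'s symbol is indeed small there), but your split destroys that cancellation, so neither piece can be bounded by the right-hand side of \eqref{45'} separately. The sign-bookkeeping you sketch in the last paragraph (substituting $\tau_i=\pm_i|\xi_i|+O(||\tau_i|-|\xi_i||)$) only rescues the elliptic gain for equal signs; for opposite signs the leading term $\pm_1\pm_2|\xi_1||\xi_2|/\langle\xi_1\rangle\langle\xi_2\rangle - 1$ is $\approx -2$, not small.

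The paper avoids this by inserting and subtracting $(\xi\cdot\eta)^2/\langle\xi\rangle^2\langle\eta\rangle^2$ rather than $\cos\theta$, writing the symbol as
\[
\Bigl(-1 + \frac{(\xi\cdot\eta)^2}{\langle\xi\rangle^2\langle\eta\rangle^2}\Bigr)
+ \frac{(\tau\lambda-\xi\cdot\eta)(\xi\cdot\eta)}{\langle\xi\rangle^2\langle\eta\rangle^2} \, .
\]
The first piece involves $\cos^2\theta$ (not $\cos\theta$) and so equals $-\sin^2\theta$ up to a $\Lambda^{-2}$ error, uniformly and independently of signs; the second piece carries the full numerator $\tau\lambda-\xi\cdot\eta$, which is exactly the $q_0$ symbol (times $|\xi||\eta|$), so it is dominated by $|q_0(\xi,\eta)|$ with no sign analysis or on-shell reduction needed. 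That is the decomposition you should adopt; as written, your argument has a real gap in the $(\pm_1,\pm_2)=(+,-)$ or $(-,+)$ antiparallel regime.
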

\begin{proof}
$\Gamma^1(u,v)$ has the symbol
\begin{align*}
p(\xi,\tau,\eta,\lambda)& = -1 + \frac{\angles{\xi,\eta} \tau \lambda}{\angles{\xi}^2 \angles{\eta}^2} = \left( -1 + \frac{\angles{\xi,\eta} \angles{\xi,\eta}}{\angles{\xi}^2 \angles{\eta}^2}\right) + \frac{(\tau \lambda -\angles{\xi,\eta}) \angles{\xi,\eta}}{\angles{\xi}^2 \angles{\eta}^2}  = I + II
\end{align*}
Now we estimate
\begin{align*}
|I| & = \left| \frac{|\xi|^2 |\eta|^2\cos^2 \angle(\xi,\eta)}{\angles{\xi}^2 \angles{\eta}^2} -1 \right| \\
&
\le \left| \frac{\angles{\xi}^2 \angles{\eta}^2 \cos^2 \angle(\xi,\eta)}{\angles{\xi}^2 \angles{\eta}^2} -1 \right|
+ \left| \frac{|\xi|^2 |\eta|^2 - \angles{\xi}^2 \angles{\eta}^2}{\angles{\xi}^2 \angles{\eta}^2} \right| \\
& = \sin^2 \angle(\xi,\eta) + \left| \frac{|\xi|^2 |\eta|^2 - \angles{\xi}^2 \angles{\eta}^2}{\angles{\xi}^2 \angles{\eta}^2} \right| \, ,
\end{align*}
where $\angle(\xi,\eta)$ denotes the angle between $\xi$ and $\eta$ .
We have
$$\left| \frac{|\xi|^2 |\eta|^2 - \angles{\xi}^2 \angles{\eta}^2}{\angles{\xi}^2 \angles{\eta}^2} \right| = \frac{|\xi|^2 + |\eta|^2 + 1}{\angles{\xi}^2 \angles{\eta}^2} \le \frac{1}{\angles{\xi}^2} + \frac{1}{\angles{\eta}^2} $$
and
$$\sin \angle(\xi,\eta) = \frac{\xi \times \eta}{|\xi|\,|\eta|} \, . $$
 Thus the operator belonging to the symbol I is controlled by 
$\sum_{i,j} q_{ij}(u,v) +  (\Lambda^{-2}u)v + u(\Lambda^{-2}v)$ .
Moreover 
$$ |II| \le \frac{|\tau \lambda - \angles{\xi,\eta}|}{\angles{\xi} \angles{\eta}} \le |q_0(\xi,\eta)| \, .$$ 
Thus we obtain (\ref{45'}).
\end{proof}

\section{Bilinear estimates}
The proof of the following bilinear estimates relies on estimates given by Foschi and Klainerman \cite{FK}. We first treat the case $r>1$ , but close to $1$.
\begin{lemma}
\label{Lemma1}
Assume $0 \le\alpha_1,\alpha_2 $ ,  $\alpha_1+\alpha_2 \ge \frac{2}{r}$ and $ b > \frac{1}{r}$. The following estimate applies
$$ \|q_{ij}(u,v)\|_{H^r_{0,0}} \lesssim \|u\|_{X^r_{\alpha_1,b,\pm_1}} \|v\|_{X^r_{\alpha_2,b,\pm_2}} \, . $$
\end{lemma}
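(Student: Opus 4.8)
Here is a plan for proving Lemma~\ref{Lemma1}.

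The plan is to exploit the null structure of $Q_{ij}$ and then, via the transfer principle, to reduce the estimate to a bilinear space--time estimate for products of free waves of Foschi--Klainerman type. First I would pass to the Fourier side and observe that $q_{ij}(u,v)=Q_{ij}(D^{-1}u,D^{-1}v)$ is the bilinear Fourier multiplier whose symbol at the spatial frequencies $\xi_1,\xi_2$ of $u$ and $v$ equals $\big((\xi_1)_j(\xi_2)_i-(\xi_1)_i(\xi_2)_j\big)/(|\xi_1||\xi_2|)$, which in modulus is bounded by $|\xi_1\times\xi_2|/(|\xi_1||\xi_2|)=\sin\angle(\xi_1,\xi_2)=:\theta(\xi_1,\xi_2)$ (for $i=j$ it vanishes, for $i\neq j$ it is a component of the cross product). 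Since the $H^r_{0,0}$-norm depends only on $|\widehat{\,\cdot\,}|$, this gives $q_{ij}(u,v)\preceq\Theta(u,v)$ in the sense of Lemma~\ref{Lemma2.1}, where $\Theta$ is the bilinear multiplier with the nonnegative symbol $\theta$, and it suffices to prove the asserted bound with $\Theta$ in place of $q_{ij}$.

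Next I would note that $H^r_{0,0}$ is precisely the space with norm $\norm{\widehat u}_{L^{r'}_{\tau\xi}}=\norm{u}_{\hat{L}^r_t(\hat{L}^r_x)}$, and that $X^r_{\alpha,b,\pm}\hookrightarrow H^r_{\alpha,b}$ because $\langle|\tau|-|\xi|\rangle\le\langle\tau\pm|\xi|\rangle$. Hence, since $b>\frac1r$, Proposition~\ref{Prop.0.1} applied to the fixed bilinear operator $\Theta$ (with $p=q=r$ and $s_i=\alpha_i$) reduces the claim, for all four sign combinations, to the free-wave estimate
$$\norm{\Theta(e^{\pm_1itD}f,e^{\pm_2itD}g)}_{\hat{L}^r_t(\hat{L}^r_x)}\lesssim\norm{f}_{\widehat{H}^{\alpha_1,r}}\norm{g}_{\widehat{H}^{\alpha_2,r}}.$$
To prove this I would first reduce by a dyadic decomposition in $|\xi_1|$, $|\xi_2|$ and the output frequency $|\xi_1+\xi_2|$ (the inhomogeneous weights together with $\alpha_j\ge0$ make the low-frequency contributions harmless), then compute the space--time Fourier transform of the product $(e^{\pm_1itD}f)(e^{\pm_2itD}g)$ as a density on the surface $\{\tau=\pm_1|\xi_1|\pm_2|\xi_2|,\ \xi=\xi_1+\xi_2\}$, so that the estimate becomes a weighted $L^{r'}$-convolution bound carrying the angular weight $\theta$. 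This is exactly where the elementary geometric estimates of Foschi--Klainerman \cite{FK}, and their Fourier--Lebesgue versions due to Gr\"unrock \cite{G},\cite{G1} and Grigoryan--Nahmod \cite{GN}, enter: on the near-resonant part of the surface (near-parallel $\xi_1,\xi_2$ for equal signs, near-antiparallel for opposite signs) one has $\theta^2\lesssim\frac{||\tau|-|\xi||}{\min(|\xi|,|\xi_1|,|\xi_2|)}$, and combining this null gain with the sharp $\hat{L}^r$ bilinear/Strichartz estimates for the half-wave propagator makes the homogeneous estimate hold at its scaling-critical exponent $\alpha_1+\alpha_2=\frac2r$ (without the factor $\theta$ a bilinear Strichartz estimate at this exponent would fail); the full range $\alpha_1+\alpha_2\ge\frac2r$ then follows by distributing the surplus derivatives over the dyadic blocks, where the strict inequality makes the resulting series summable and the endpoint $=\frac2r$ is handled directly via $\norm{\cdot}_{\dot{\widehat{H}}^{\alpha_j}}\le\norm{\cdot}_{\widehat{H}^{\alpha_j}}$.

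The hard part is the free-wave estimate, and within it the opposite-sign interaction in the high$\times$high$\to$low regime ($|\xi_1+\xi_2|\ll|\xi_1|\sim|\xi_2|$): there the output frequency is decoupled from the inputs, the resonance surface degenerates, and the crude bound $\theta\le1$ is insufficient — one must keep the full Foschi--Klainerman angular analysis, distinguishing whether the resonance is governed by the output modulation or by one of the input modulations. The second technical ingredient is the passage from the Hilbertian case $r=2$ of \cite{FK}, where $L^2$-orthogonality is available, to the genuine Fourier--Lebesgue range $1<r<2$, where orthogonality must be replaced throughout by Young- and H\"older-type inequalities; this is precisely the adaptation carried out in \cite{G} and \cite{GN}, which I would invoke here (interpolating with, or arguing directly as, $r\to1$ at the other endpoint).
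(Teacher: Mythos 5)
Your overall route is the same as the paper's: use the transfer principle (Proposition~\ref{Prop.0.1}) to reduce to a bilinear estimate for free waves, exploit the null symbol $|q_{ij}(\eta,\xi-\eta)|\lesssim|\sin\angle(\eta,\xi-\eta)|$ with its Foschi--Klainerman gain of $||\tau|-|\xi||^{1/2}$, and split into elliptic and hyperbolic sign configurations. Where you diverge is in the mechanics, and this is also where the proposal stays at the level of a plan. The paper first reduces by interpolation to the extreme case $(\alpha_1,\alpha_2)=(\tfrac{2}{r},0)$ (after noting that with inhomogeneous norms one may assume $\alpha_1+\alpha_2=\tfrac{2}{r}$), then evaluates the resulting surface (delta-function) integral in closed form using \cite{FK} Lemma~4.3 in the elliptic case, and \cite{FK} Prop.~4.5 together with Lemma~4.4 in the two hyperbolic subcases $|\eta|+|\xi-\eta|\lessgtr 2|\xi|$. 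This direct evaluation makes your proposed dyadic decomposition and ``summation of the series over blocks'' unnecessary: the FK lemmas already give the exact powers of $\tau$, $|\xi|$ and the modulation uniformly over all frequency interactions.

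The genuine gap in your proposal is that you invoke ``sharp $\hat{L}^r$ bilinear/Strichartz estimates for the half-wave propagator'' as an off-the-shelf ingredient, but the content of this lemma is precisely such a bilinear estimate; so the argument as written is circular. What must actually be shown is the $L^{r'}$ bound on the weighted convolution carried by the cone measure, and that is exactly what the FK delta-integral lemmas furnish. You do correctly identify the opposite-sign high$\times$high$\to$low interaction as the delicate regime; this is indeed where the paper has to split into $|\eta|+|\xi-\eta|\lessgtr 2|\xi|$ and where convergence of $\int_2^\infty(x+\tfrac{\tau}{|\xi|})^{-1-r/2}(x-\tfrac{\tau}{|\xi|})^{1-r/2}\,dx$ requires $r>1$. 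Finally, a minor misstatement: you do not need a separate treatment of the endpoint $\alpha_1+\alpha_2=\tfrac{2}{r}$ via homogeneous-versus-inhomogeneous comparison; since the right-hand norms are inhomogeneous and $\alpha_j\ge 0$, the case $\alpha_1+\alpha_2>\tfrac{2}{r}$ is trivially weaker than the equality case, which is all the paper proves.
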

\begin{proof}
	Because we use inhomogeneous norms it is obviously possible to assume $\alpha_1 + \alpha_2 = \frac{2}{r}$ . Moreover, by interpolation we may reduce to the case $\alpha_1= \frac{2}{r}$ , $\alpha_2 =0$ .
	
The left hand side of the claimed estimate equals
$$ \|{\mathcal F}(q_{ij}(u,v))\|_{L^{r'}_{\tau \xi}} = \| \int q_{ij}(\eta,\eta-\xi) \tilde{u}(\lambda,\eta) \tilde{v}(\tau - \lambda,\xi - \eta) d\lambda d\eta \|_{L^{r'}_{\tau \xi}} \, . $$
Let now $u(t,x) = e^{\pm_1 iD} u_0^{\pm_1}(x)$ , $v(t,x) = e^{\pm_2 iD} v_0^{\pm_2}(x)$ , so that 
$$ \tilde{u}(\tau,\xi) = c \delta(\tau \mp_1 |\xi|) \widehat{u_0^{\pm_1}}(\xi) \quad , \quad \tilde{v}(\tau,\xi) = c \delta(\tau \mp_2 |\xi|) \widehat{v_0^{\pm_2}}(\xi) \, . $$
This implies
\begin{align*}
&\|{\mathcal F}(q_{ij}(u,v))\|_{L^{r'}_{\tau \xi}} \\
&= c^2 \| \int q_{ij}(\eta,\eta-\xi) \widehat{u_0^{\pm_1}}(\eta) \widehat{v_0^{\pm_2}}(\xi-\eta) \,\delta(\lambda \mp_1 |\eta|) \delta(\tau-\lambda\mp_2|\xi-\eta|) d\lambda d\eta \|_{L^{r'}_{\tau \xi}} \\
& = c^2 \| \int q_{ij}(\eta,\eta-\xi) \widehat{u_0^{\pm_1}}(\eta) \widehat{v_0^{\pm_2}}(\xi-\eta) \,\delta(\tau\mp_1|\eta| \mp_2|\xi-\eta|) d\eta \|_{L^{r'}_{\tau \xi}} \, .
\end{align*}
By symmetry we only have to consider the  elliptic case $\pm_1=\pm_2 = +$ and the hyperbolic case $\pm_1= + \, , \, \pm_2=-$ .  \\
{\bf Elliptic case.} We obtain by \cite{FK}, Lemma 13.2:
$$|q_{ij}(\eta,\xi-\eta)| \le \frac{|\eta \times (\xi - \eta)|}{|\eta| \, |\xi - \eta|} \lesssim \frac{|\xi|^{\half} (|\eta| + |\xi - \eta| - |\xi|)^{\half}}{|\eta|^{\half} |\xi - \eta|^{\half}} \, . $$
By H\"older's inequality we obtain
\begin{align*}
&\|{\mathcal F}(q_{ij}(u,v))\|_{L^{r'}_{\tau \xi}} \\
& \lesssim \|\int \frac{|\xi|^{\half} ||\tau|-|\xi||^{\half}}{|\eta|^{\half} |\xi - \eta|^{\half}} \,
 \delta(\tau-|\eta|-|\xi - \eta|) \, |\widehat{u_0^+}(\eta)| \, |\widehat{v_0^+}(\xi - \eta)| d\eta \|_{L^{r'}_{\tau \xi}} \\
& \lesssim \sup_{\tau,\xi} I  \,\, \|\widehat{D^{\frac{2}{r}} u_0^+}\|_{L^{r'}} \| \widehat{v_0^+}\|_{L^{r'}} \, ,
\end{align*}
where
$$ I = |\xi|^{\half} ||\tau|-|\xi||^{\half} \left( \int \delta(\tau - |\eta| - |\xi - \eta|) \, |\eta|^{-(\frac{2}{r} + \half)r} |\xi - \eta|^{-\frac{r}{2}} d\eta \right)^{\frac{1}{r}} \, . $$
We want to prove $ \sup_{\tau,\xi} I \lesssim 1 $ . By \cite{FK}, Lemma 4.3 we obtain
$$\int \delta(\tau - |\eta| - |\xi - \eta|) \, |\eta|^{-(\frac{2}{r} + \half)r} |\xi - \eta|^{-\frac{r}{2}} d\eta \sim \tau^A ||\tau|-|\xi||^B \, , $$
where $A= \max(\frac{2}{r}+\half)r,2) - (\frac{2}{r}+1)r= -\frac{r}{2}$ and $B=2-\max(\frac{2}{r}+\half)r,2)=-\frac{r}{2}$ . This implies
$$
I  \lesssim |\xi|^{\half} ||\tau|-|\xi||^{\half} \tau^{-\half} ||\tau|-|\xi||^{-\half}\le  1 $$
using $|\xi| \le |\tau|$ . \\
{\bf Hyperbolic case.} We start with the following bound (cf. \cite{FK}, Lemma 13.2):
$$  |q_{ij}(\eta,\xi-\eta)| \le \frac{|\eta \times (\xi-\eta)|}{|\eta|\,|\xi-\eta|} \lesssim \frac{|\xi|^{\half} (|\xi|-||\eta|-|\eta-\xi||)^{\half}}{|\eta|^{\half} |\xi-\eta|^{\half}} \, , $$
so that similarly as in the elliptic case we have to estimate
$$ I = |\xi|^{\half} ||\tau|-|\xi||^{\half} \left( \int \delta(\tau - |\eta| + |\xi - \eta|) \, |\eta|^{-(\frac{2}{r} + \half)r} |\xi - \eta|^{-\frac{r}{2}} d\eta \right)^{\frac{1}{r}} \, . $$
In the subcase $|\eta|+|\xi-\eta| \le 2|\xi|$ we apply \cite{FK}, Prop. 4.5 and obtain
$$\int \delta(\tau - |\eta| + |\xi - \eta|) \, |\eta|^{-(\frac{2}{r} + \half)r} |\xi - \eta|^{-\frac{r}{2}} d\eta \sim |\xi|^A ||\xi|-|\tau||^B \, . $$
where $A=\max((\frac{2}{r}+\half)r,2) - (\frac{2}{r}+1)r = -\frac{r}{2}$ and $B= 2- \max((\frac{2}{r}+\half)r,2)= -\frac{r}{2}$. \\
This implies
$$I \lesssim |\xi|^{\half} ||\tau|-|\xi||^{\half} |\xi|^{-\half} ||\tau|-|\xi||^{-\half} = 1 \, . $$
In the subcase $|\eta| + |\xi-\eta| \ge 2|\xi|$ we obtain by \cite{FK}, Lemma 4.4:
 \begin{align*}
&\int \delta(\tau - |\eta| + |\xi - \eta|) \, |\eta|^{-2-\frac{r}{2}} |\xi - \eta|^{-\frac{r}{2}} d\eta \\
&\sim \int_2^{\infty} (|\xi|x+\tau)^{-1-\frac{r}{2}} (|\xi|x-\tau)^{1-\frac{r}{2}} dx \\
&\sim \int_2^{\infty} (x+\frac{\tau}{|\xi|})^{-1-\frac{r}{2}} (x-\frac{\tau}{|\xi|})^{1-\frac{r}{2}} dx \, \cdot|\xi|^{-r} \, .
\end{align*}
We remark that in fact the lower limit of the integral can be chosen as 2 by inspection of the proof in \cite{FK}.
The integral converges, because $|\tau| \le |\xi|$ and $r > 1.$  This implies the bound
$$ I \lesssim |\xi|^{\half} ||\tau|-|\xi||^{\half} |\xi|^{-1} \lesssim  1 \, . $$
Summarizing we obtain
$$\|q_{ij}(u,v)\|_{X^r_{0,0}} \lesssim \|D^{\frac{2}{r}} u_0^{\pm_1}\|_{L^{r'}}  \| v_0^{\pm_2}\|_{L^{r'}} \, . $$
By the transfer principle Prop. \ref{Prop.0.1} we obtain the claimed result. 
\end{proof}

In a similar manner we can also estimate the nullform $q_{0j}(u,v)$ .
\begin{lemma}
\label{Lemma2}
Assume $0 \le \alpha_1,\alpha_2 $ ,  $\alpha_1+\alpha_2 \ge \frac{2}{r}$ and $ b > \frac{1}{r}$ . The following estimate applies
$$ \|q_{0j}(u,v)\|_{H^r_{0,0}} \lesssim \|u\|_{X^r_{\alpha_1,b,\pm_1}} \|v\|_{X^r_{\alpha_2,b,\pm_2}} \, . $$
\end{lemma}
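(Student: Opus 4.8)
The plan is to mirror the proof of Lemma \ref{Lemma1}. Since the spaces carry inhomogeneous weights, I may again assume $\alpha_1+\alpha_2=\tfrac2r$, and by complex interpolation it suffices to treat the endpoint $\alpha_1=\tfrac2r$, $\alpha_2=0$. Inserting half-wave data $u=e^{\pm_1 itD}u_0$, $v=e^{\pm_2 itD}v_0$, the Fourier transform of $q_{0j}(u,v)$ becomes a convolution integral supported on the characteristic surface $\{\tau\mp_1|\eta|\mp_2|\xi-\eta|=0\}$, and by symmetry only the elliptic case $\pm_1=\pm_2=+$ and the hyperbolic case $\pm_1=+$, $\pm_2=-$ remain.

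The only genuinely new ingredient is the pointwise symbol bound for $q_{0j}$. On the characteristic surfaces the symbol of $q_{0j}(\eta,\xi-\eta)$ equals $\pm_2\frac{\eta_j}{|\eta|}\mp_1\frac{(\xi-\eta)_j}{|\xi-\eta|}$ (cf. \cite{FK}, Lemma 13.2), so $|q_{0j}(\eta,\xi-\eta)|\le\bigl|\pm_2\frac{\eta}{|\eta|}\mp_1\frac{\xi-\eta}{|\xi-\eta|}\bigr|$. Using $\bigl|\frac{\eta}{|\eta|}\pm\frac{\zeta}{|\zeta|}\bigr|^2=2(1\pm\cos\angle(\eta,\zeta))$ together with $2(1-\cos\angle(\eta,\zeta))=\frac{(|\eta|+|\zeta|-|\eta+\zeta|)(|\eta|+|\zeta|+|\eta+\zeta|)}{|\eta||\zeta|}$ and $2(1+\cos\angle(\eta,\zeta))=\frac{(|\eta+\zeta|-||\eta|-|\zeta||)(|\eta+\zeta|+||\eta|-|\zeta||)}{|\eta||\zeta|}$, I get in the hyperbolic case exactly the bound already exploited for $q_{ij}$, namely $|q_{0j}(\eta,\xi-\eta)|\lesssim\frac{|\xi|^{1/2}(|\xi|-||\eta|-|\xi-\eta||)^{1/2}}{|\eta|^{1/2}|\xi-\eta|^{1/2}}$, whereas in the elliptic case I only get the slightly weaker bound $|q_{0j}(\eta,\xi-\eta)|\lesssim\bigl(\frac{|\eta|+|\xi-\eta|-|\xi|}{\min(|\eta|,|\xi-\eta|)}\bigr)^{1/2}$ — the extra factor $|\xi|^{1/2}$ available for $q_{ij}$ being absent.

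With these bounds the estimate of Lemma \ref{Lemma1} goes through. In the hyperbolic case it is literally the same computation. In the elliptic case I set $h:=|\eta|+|\xi-\eta|-|\xi|$, which on the surface equals $|\tau|-|\xi|$, and split $\min(|\eta|,|\xi-\eta|)^{-1/2}\le|\eta|^{-1/2}+|\xi-\eta|^{-1/2}$. In the first resulting term $|\eta|^{-1/2}$ is absorbed together with $|\eta|^{-2/r}$ into $D^{2/r}u_0$; in the second $|\xi-\eta|^{-1/2}$ sits on $v_0$. In both cases H\"older's inequality reduces matters to showing $\sup_{\tau,\xi}I\lesssim1$ with $I=||\tau|-|\xi||^{1/2}\bigl(\int\delta(\tau-|\eta|-|\xi-\eta|)\,|\eta|^{-c_1}|\xi-\eta|^{-c_2}\,d\eta\bigr)^{1/r}$; the Foschi--Klainerman convolution estimate (\cite{FK}, Lemma 4.3) evaluates the integral as $\tau^A||\tau|-|\xi||^B$ with $(A,B)=(0,-\tfrac r2)$ in the first term (there $c_1=2+\tfrac r2$, $c_2=0$) and $(A,B)=(-\tfrac r2,0)$ in the second ($c_1=2$, $c_2=\tfrac r2$), and using $h=||\tau|-|\xi||\le|\tau|$ one obtains $I\lesssim1$ in each case. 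The transfer principle (Prop. \ref{Prop.0.1}) then finishes the proof.

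The only point needing care is the elliptic case, precisely because $q_{0j}$ — unlike $q_{ij}$ — carries no spare power $|\xi|^{1/2}$; the remedy is the elementary split of $\min(|\eta|,|\xi-\eta|)^{-1/2}$ above, which redistributes the weights so that the relevant Foschi--Klainerman integral still produces the compensating factor $||\tau|-|\xi||^{-1/2}$. Everything else is identical to Lemma \ref{Lemma1}, so I expect no further obstacle.
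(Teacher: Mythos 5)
Your proof mirrors the paper's argument essentially step for step: the same reduction to $\alpha_1=\tfrac{2}{r},\ \alpha_2=0$, the same Foschi--Klainerman symbol bounds for $q_{0j}$ on the characteristic surfaces, the same handling of $\min(|\eta|,|\xi-\eta|)^{-1/2}$ (the paper phrases this as a case split on whether $|\eta|\le|\xi-\eta|$ or $|\eta|\ge|\xi-\eta|$, which amounts to the same redistribution of weights as your elementary split), the same FK convolution evaluations, and the transfer principle at the end. The one place you glide over a genuine feature is the second elliptic term with exponents $(c_1,c_2)=(2,\tfrac{r}{2})$: here $\max(c_1,c_2)=2$ hits the borderline of \cite{FK}, Lemma 4.3, so the integral is not simply $\tau^{-r/2}\,||\tau|-|\xi||^{0}$ but carries a logarithmic correction $\bigl(1+\log\tfrac{|\tau|}{||\tau|-|\xi||}\bigr)^{1/r}$. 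The paper records this explicitly and then dominates it by trading a small power, $I\lesssim ||\tau|-|\xi||^{1/2}|\tau|^{-1/2}\bigl(1+|\tau|^{\epsilon}/||\tau|-|\xi||^{\epsilon}\bigr)\lesssim 1$. Your final bound $I\lesssim 1$ is still correct, but the intermediate formula you wrote down for that term omits the log and should acknowledge it; otherwise the argument is the paper's.
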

\begin{proof}
	Again we may  reduce to the case $\alpha_1= \frac{2}{r}$ and $\alpha_2=0$ .
Arguing as in the proof of Lemma \ref{Lemma1} we use in the elliptic case the estimate (cf. \cite{FK}, Lemma 13.2):
$$|q_{0j}(\eta,\xi-\eta)| \lesssim \frac{(|\eta|+|\xi-\eta|-|\xi|)^{\half}}{\min(|\eta|^{\half},|\xi-\eta|^{\half})} \, . $$
In the case  $|\eta| \le |\xi-\eta|$ we obtain
\begin{align*}
I &= ||\tau|-|\xi||^{\half} \left( \int \delta(\tau - |\eta| - |\xi - \eta|) \, |\eta|^{-2-\frac{r}{2}} d\eta \right)^{\frac{1}{r}} \\
&\sim ||\tau|-|\xi||^{\half} |\tau|^{\frac{A}{r}}  ||\tau|-|\xi||^{\frac{B}{r}} = 1\, ,
\end{align*}
because $ A=\max((\frac{2}{r}+\half)r,2) - (\frac{2}{r}+\half)r = 0$ and $B= -\frac{r}{2}$ . \\
In the case $|\eta| \ge |\xi-\eta|$ we obtain
\begin{align*}
I &= ||\tau|-|\xi||^{\half} \left( \int \delta(\tau - |\eta| - |\xi - \eta|) \, |\eta|^{-2} |\xi-\eta|^{-\frac{r}{2}} d\eta \right)^{\frac{1}{r}} \\
&\sim ||\tau|-|\xi||^{\half} |\tau|^{\frac{A}{r}}  ||\tau|-|\xi||^{\frac{B}{r}} (1+\log \frac{|\tau|}{||\tau|-|\xi||})^{\frac{1}{r}}\, ,
\end{align*}
where we are in the exceptional case, where a logarithmic factor appears, and $A= \max(\frac{r}{2},2)-2-\frac{r}{2} = -\frac{r}{2}$ and $B=2-\max(\frac{r}{2},2) =0$, so that
$$I \lesssim ||\tau|-|\xi||^{\half} \tau^{-\half} (1+ \frac{|\tau|^{\epsilon}}{||\tau|-|\xi||^{\epsilon}}) \lesssim  1 \, .$$
 In the hyperbolic case we obtain by \cite{FK}, Lemma 13.2:
$$|q_{0j}(\eta, \xi-\eta)| \lesssim |\xi|^{\half} \frac{(|\xi|-||\eta|-|\eta-\xi||)^{\half}}{|\eta|^{\half} |\xi-\eta|^{\half}} $$
and argue exactly as in the proof of Lemma \ref{Lemma1}. The proof is completed as before.
\end{proof}

We also need the same result for $q_0(u,v)$ .
\begin{lemma}
\label{Lemma3}
Assume $0 \le \alpha_1,\alpha_2 $ , $\alpha_1+\alpha_2 \ge \frac{2}{r}$ and $ b > \frac{1}{r}$ . The following estimate applies
$$ \|q_0(u,v)\|_{H^r_{0,0}} \lesssim \|u\|_{X^r_{\alpha_1,b,\pm_1}} \|v\|_{X^r_{\alpha_2,b,\pm_2}} \, . $$
\end{lemma}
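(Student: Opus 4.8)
The plan is to imitate the proof of Lemma \ref{Lemma1} almost verbatim, since the null form $q_0$ differs from $q_{ij}$ only in the precise shape of its symbol. By the definition $q_0(u,v) = Q_0(D^{-1}u, D^{-1}v)$ and the symbol of $Q_0$, we have $q_0(\eta,\xi-\eta) = \frac{-\tau_1\tau_2 + \eta\cdot(\xi-\eta)}{|\eta|\,|\xi-\eta|}$ where on the support of the measures appearing below $\tau_1 = \pm_1|\eta|$ and $\tau_2 = \pm_2|\xi-\eta|$. As in Lemmas \ref{Lemma1} and \ref{Lemma2}, because the norms are inhomogeneous we may take $\alpha_1+\alpha_2 = \frac{2}{r}$, and by interpolation reduce to $\alpha_1 = \frac{2}{r}$, $\alpha_2 = 0$. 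Passing to Fourier variables and plugging in $u = e^{\pm_1 itD}u_0^{\pm_1}$, $v = e^{\pm_2 itD}v_0^{\pm_2}$ exactly as before, the two $\delta$-measures collapse to a single one $\delta(\tau \mp_1|\eta| \mp_2|\xi-\eta|)$, and by symmetry only the elliptic case $\pm_1 = \pm_2 = +$ and the hyperbolic case $\pm_1 = +$, $\pm_2 = -$ remain.

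The key input is the standard pointwise bound for the $Q_0$ symbol in terms of the ``angle'' and the modulation: in the elliptic case $|q_0(\eta,\xi-\eta)| \lesssim \frac{(|\eta|+|\xi-\eta|-|\xi|)^{\half}}{|\eta|^{\half}|\xi-\eta|^{\half}}\,|\xi|^{\half}$ up to lower order terms (this is of the same form as the bound for $q_{ij}$ used in Lemma \ref{Lemma1}), and analogously $|q_0(\eta,\xi-\eta)| \lesssim \frac{(|\xi|-||\eta|-|\xi-\eta||)^{\half}}{|\eta|^{\half}|\xi-\eta|^{\half}}\,|\xi|^{\half}$ in the hyperbolic case; see \cite{FK}, Lemma 13.2. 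These are exactly the bounds already exploited for $q_{ij}$, so after inserting them the quantity $I$ to be estimated is literally the same expression as in the proof of Lemma \ref{Lemma1}, and the Foschi--Klainerman convolution estimates (\cite{FK}, Lemma 4.3, Prop. 4.5, Lemma 4.4) give $\sup_{\tau,\xi} I \lesssim 1$ in the elliptic case and in both subcases $|\eta|+|\xi-\eta| \lessgtr 2|\xi|$ of the hyperbolic case, using $|\xi| \le |\tau|$ and $r > 1$ as before. Finally the transfer principle Prop. \ref{Prop.0.1} upgrades the $e^{\pm itD}$-estimate to the claimed $H^r_{s,b}$-type estimate.

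The only genuine point requiring care, rather than a direct copy, is the verification of the pointwise symbol bound for $Q_0$: one must check that the ``defect'' $-\tau_1\tau_2 + \eta\cdot(\xi-\eta)$, with $\tau_i = \pm_i|\cdot|$, is controlled by $|\xi|^{\half}$ times the appropriate half-power of the hyperbolic/elliptic weight, together with harmless lower-order terms handled by the $\Lambda^{-2}$-type factors as in Lemma \ref{Lemma2.1}. This is classical (it is the computation $-\tau_1\tau_2 + \eta\cdot(\xi-\eta) = -|\eta||\xi-\eta|\mp_1\mp_2 + |\eta||\xi-\eta|\cos\angle$, combined with $|\xi|^2 = |\eta|^2 + |\xi-\eta|^2 + 2\eta\cdot(\xi-\eta)$ to relate the angle to $(|\eta|+|\xi-\eta|)^2 - |\xi|^2$ or $|\xi|^2 - (|\eta|-|\xi-\eta|)^2$), and once it is in place the rest of the proof is identical to that of Lemma \ref{Lemma1}. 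I therefore expect no real obstacle; the lemma follows by the same scheme.
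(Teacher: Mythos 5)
Your sketch has a genuine gap in the only place you yourself flagged as "requiring care": the pointwise symbol bound for $q_0$. You assert that, in the elliptic case,
\[
|q_0(\eta,\xi-\eta)| \lesssim \frac{|\xi|^{\half}\,(|\eta|+|\xi-\eta|-|\xi|)^{\half}}{|\eta|^{\half}\,|\xi-\eta|^{\half}},
\]
i.e.\ the same half-power bound used for $q_{ij}$ in Lemma \ref{Lemma1}. This is false. The symbol of $Q_0$ on the cone is quadratic in the angle, not linear: with $\pm_1=\pm_2=+$ one has $q_0(\eta,\xi-\eta)=\cos\angle(\eta,\xi-\eta)-1$, and hence
\[
|q_0(\eta,\xi-\eta)| = \frac{\bigl(|\eta|+|\xi-\eta|-|\xi|\bigr)\bigl(|\eta|+|\xi-\eta|+|\xi|\bigr)}{2|\eta|\,|\xi-\eta|} \lesssim \frac{|\eta|+|\xi-\eta|-|\xi|}{\min(|\eta|,|\xi-\eta|)},
\]
which is a \emph{full} power of the elliptic weight divided by $\min$, not a half power over the geometric mean. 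Your half-power inequality already fails for nearly antiparallel frequencies (take $\eta=(1,0,0)$, $\xi-\eta=(-1+\epsilon,0,0)$: then $|q_0|\to 2$ while your bound tends to $0$ as $\epsilon\to 0$). The hyperbolic bound you propose fails similarly for parallel frequencies of unequal length.

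Because the correct bound involves $\min(|\eta|,|\xi-\eta|)^{-1}$ rather than $|\eta|^{-\half}|\xi-\eta|^{-\half}$, the ensuing computation is \emph{not} "literally the same" as in Lemma \ref{Lemma1}. One has to split according to which of $|\eta|,|\xi-\eta|$ is smaller; after inserting the weight and applying the Foschi--Klainerman Lemma 4.3, the case $|\eta|\ge|\xi-\eta|$ lands in the \emph{exceptional} regime of that lemma, producing a factor $\bigl(1+\log\frac{|\tau|}{||\tau|-|\xi||}\bigr)^{1/r}$. This logarithm must be absorbed using $|\xi|\le|\tau|$ together with the extra half power $||\tau|-|\xi||\,|\tau|^{-1}$ that the full-power bound supplies. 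Your scheme, built on the false half-power estimate, never encounters this log and so never handles it. The rest of your outline (reduction to $\alpha_1=\frac{2}{r}$, $\alpha_2=0$; collapse of the two delta measures; elliptic and $(+,-)$ cases by symmetry; the two hyperbolic subcases; transfer principle) is fine and matches the paper, but the core symbol analysis must be redone with the quadratic $Q_0$ bound.
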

\begin{proof}
	As before we reduce to the case $\alpha_1= \frac{2}{r}$ and $\alpha_2 =0$ .
We use in the elliptic case the estimate (cf. \cite{FK}, Lemma 13.2):
$$|q_{0}(\eta,\xi-\eta)| \lesssim \frac{(|\eta|+|\xi-\eta|-|\xi|)}{\min(|\eta|,|\xi-\eta|)} \, . $$
In the case $|\eta| \le |\xi-\eta|$ we have to estimate
\begin{align*}
I &= ||\tau|-|\xi|| \left( \int \delta(\tau - |\eta| - |\xi - \eta|) \, |\eta|^{-2-r}  d\eta \right)^{\frac{1}{r}} \\
&\sim ||\tau|-|\xi|| \, |\tau|^{\frac{A}{r}}  ||\tau|-|\xi||^{\frac{B}{r}}= 1 \, ,
\end{align*}
because $A=\max(2+r,2)-(2+r)=0$ and $B=2-\max(2+r,2)= -r$ . \\
In the case  $|\eta| \ge |\xi-\eta|$ we obtain
\begin{align*}
I &= ||\tau|-|\xi|| \left( \int \delta(\tau - |\eta| - |\xi - \eta|) \, |\eta|^{-2}  |\xi-\eta|^{-r} d\eta \right)^{\frac{1}{r}} \\
&\sim ||\tau|-|\xi|| \, |\tau|^{\frac{A}{r}}  ||\tau|-|\xi||^{\frac{B}{r}} (1+ \log \frac{|\tau|}{||\tau|-|\xi||})^\frac{1}{r}  \, .
\end{align*}
Here we are in the exceptional case, where the logarithmic factor appears.  Because $A=\max(2,r)-2-r=-r$ and $B=2-\max(2,r)= 0$ we obtain by $|\xi| \le |\tau|$ the estimate
$$ I \sim   ||\tau|-|\xi|| \, |\tau|^{-1}   (1+ \log \frac{|\tau|}{||\tau|-|\xi||})^\frac{1}{r} \lesssim 1 \,. $$
In the hyperbolic case we obtain by \cite{FK}, Lemma 13.2:
$$|q_{0j}(\eta, \xi-\eta)| \lesssim |\xi|\frac{|\xi|-||\eta|-|\eta-\xi||}{|\eta|\, |\xi-\eta|} \, .$$
In the subcase $|\eta|+|\xi-\eta| \le 2|\xi|$ we apply \cite{FK}, Prop. 4.5 and obtain
$$\int \delta(\tau - |\eta| + |\xi - \eta|) \, |\eta|^{-2-r} |\xi - \eta|^{-r} d\eta \sim |\xi|^A ||\xi|-|\tau||^B \ $$
with $A=\max(2+r,r,2)-2-2r=-r$ , $B=2-\max(2+r,r,2)=-r$ , so that
$$I \lesssim |\xi| \,||\tau|-|\xi|| \,|\xi|^{-1} ||\tau|-|\xi|^{-1} = 1 \, . $$
In the subcase $|\eta| + |\xi-\eta| \ge 2|\xi|$ we obtain by \cite{FK}, Lemma 4.4:
 \begin{align*}
&\int \delta(\tau - |\eta| + |\xi - \eta|) \, |\eta|^{-2-r} |\xi - \eta|^{-r} d\eta \\
&\sim \int_2^{\infty} (x+\frac{\tau}{|\xi|})^{-1-r} (x-\frac{\tau}{|\xi|})^{-r+1} dx \, \cdot|\xi|^{-2r} \, .
\end{align*}
The integral converges, because $|\tau| \le |\xi|$ . This implies the bound
$$ I \lesssim |\xi| \,||\tau|-|\xi||\, |\xi|^{-2} \lesssim  1 \, . $$
The proof is completed as the proof of Lemma \ref{Lemma1}.
\end{proof}

\begin{lemma}
\label{Lemma4}
Assume $0 \le \alpha_0 \le \alpha_1,\alpha_2 $ , $\alpha_1,\alpha_2 \neq \frac{2}{r}$ , $\alpha_1+\alpha_2-\alpha_0 =\frac{2}{r}$ , $\alpha_1+\alpha_2 >\frac{3}{r}$, $b > \frac{1}{r}$. Then the following estimate applies:
$$\|uv\|_{\dot{H}^r_{\alpha_0,0}} \lesssim \|u\|_{\dot{X}^r_{\alpha_1,b,\pm_1}} \|v\|_{\dot{X}^r_{\alpha_2,b,\pm_2}} \, . $$
\end{lemma}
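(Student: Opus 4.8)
The plan is to follow the template of the proof of Lemma~\ref{Lemma1}. By the transfer principle (Prop.~\ref{Prop.0.1}, in its homogeneous version and with $p=q=r$, so that the mixed norm collapses and $\|w\|_{\dot H^r_{\alpha_0,0}} = \|D^{\alpha_0}w\|_{\hat{L}^r_t(\hat{L}^r_x)}$), it suffices to prove the bilinear estimate for products of free half-waves,
$$ \|D^{\alpha_0}\big(e^{\pm_1 itD}f\cdot e^{\pm_2 itD}g\big)\|_{\hat{L}^r_t(\hat{L}^r_x)} \lesssim \|f\|_{\dot{\widehat{H}}^{\alpha_1,r}}\,\|g\|_{\dot{\widehat{H}}^{\alpha_2,r}} $$
for every combination of signs; by symmetry only the elliptic case $\pm_1=\pm_2$ and the hyperbolic case $\pm_1=-\pm_2$ need to be treated. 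Computing the space-time Fourier transform of the product, inserting $\widehat f(\eta)=|\eta|^{-\alpha_1}F(\eta)$ and $\widehat g(\xi-\eta)=|\xi-\eta|^{-\alpha_2}G(\xi-\eta)$, and applying H\"older's inequality exactly as in the proof of Lemma~\ref{Lemma1}, the matter reduces to the uniform bound $\sup_{\tau,\xi} I\lesssim 1$, where
$$ I = |\xi|^{\alpha_0}\left(\int \delta\big(\tau\mp_1|\eta|\mp_2|\xi-\eta|\big)\,|\eta|^{-\alpha_1 r}\,|\xi-\eta|^{-\alpha_2 r}\,d\eta\right)^{1/r}. $$
The difference with Lemmas~\ref{Lemma1}--\ref{Lemma3} is that there is no null form, hence no gain of a factor $\big(|\eta|+|\xi-\eta|-|\xi|\big)^{1/2}$, resp.\ $\big(|\xi|-\big||\eta|-|\xi-\eta|\big|\big)^{1/2}$, inside the integral; the estimate must be won from the weights and the geometry of the region alone.

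From the hypotheses one reads off $\alpha_1,\alpha_2\in(0,\tfrac2r)$ --- indeed $\alpha_1=\tfrac2r$ would force $\alpha_0=\alpha_2$, contradicting $\alpha_2\neq\tfrac2r$, and symmetrically --- so $0<\alpha_1 r,\alpha_2 r<2$; moreover $(\alpha_1+\alpha_2)r=\alpha_0 r+2$, which is $>3$ by $\alpha_1+\alpha_2>\tfrac3r$. The inner integral is then evaluated with the elementary estimates of Foschi--Klainerman \cite{FK}. In the elliptic case $\tau=\pm(|\eta|+|\xi-\eta|)$, so $|\tau|\geq|\xi|$, and Lemma~4.3 of \cite{FK} gives $\int\delta(\cdots)\,d\eta\sim|\tau|^{A}\big||\tau|-|\xi|\big|^{B}$ with $A=\max(\alpha_1 r,\alpha_2 r,2)-(\alpha_1+\alpha_2)r=-\alpha_0 r$ and $B=2-\max(\alpha_1 r,\alpha_2 r,2)=0$; the maximum $2$ is attained strictly, so no logarithmic factor occurs --- this is exactly where $\alpha_1,\alpha_2\neq\tfrac2r$ is used --- whence $I\sim(|\xi|/|\tau|)^{\alpha_0}\leq1$. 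In the hyperbolic case one splits as in Lemmas~\ref{Lemma1}--\ref{Lemma3}: for $|\eta|+|\xi-\eta|\leq2|\xi|$, Prop.~4.5 of \cite{FK} gives $\int\delta(\cdots)\,d\eta\sim|\xi|^{-\alpha_0 r}$ (again no logarithm, by $\alpha_i\neq\tfrac2r$), so $I\sim1$; for $|\eta|+|\xi-\eta|\geq2|\xi|$, Lemma~4.4 of \cite{FK} gives
$$ \int\delta\big(\tau-|\eta|+|\xi-\eta|\big)\,|\eta|^{-\alpha_1 r}|\xi-\eta|^{-\alpha_2 r}\,d\eta \sim |\xi|^{2-(\alpha_1+\alpha_2)r}\int_2^\infty\Big(x+\tfrac{\tau}{|\xi|}\Big)^{1-\alpha_1 r}\Big(x-\tfrac{\tau}{|\xi|}\Big)^{1-\alpha_2 r}\,dx, $$
and the $x$-integral converges and is bounded uniformly for $|\tau|\leq|\xi|$ precisely because $(\alpha_1+\alpha_2)r>3$ makes the integrand decay like $x^{2-(\alpha_1+\alpha_2)r}$ with exponent $<-1$ (the same place where $r>1$ was used in Lemma~\ref{Lemma1}); since $\tfrac1r\big(2-(\alpha_1+\alpha_2)r\big)=\tfrac2r-(\alpha_1+\alpha_2)=-\alpha_0$, this yields $I\sim|\xi|^{\alpha_0}\cdot|\xi|^{-\alpha_0}\lesssim1$.

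It remains to observe that the scaling identity $\alpha_1+\alpha_2-\alpha_0=\tfrac2r$ is exactly what renders $I$ homogeneous of degree $0$ in $(\tau,\xi)$, so that $\sup_{\tau,\xi}I$ is finite rather than divergent; once $\sup I\lesssim1$ is established, the transfer principle (in its $\dot X^r_{s,b,\pm}$ form) completes the proof. The only genuine work --- and the main source of obstacles --- is the bookkeeping of the Foschi--Klainerman exponents so as to remain strictly away from all of their exceptional, logarithmic cases, which is guaranteed by $\alpha_1,\alpha_2\neq\tfrac2r$, together with the uniform control of the tail integral near the light cone $|\tau|=|\xi|$ in the hyperbolic far region, which is guaranteed by $\alpha_1+\alpha_2>\tfrac3r$.
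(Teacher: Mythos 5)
Your proposal takes exactly the same route as the paper's proof: reduce via the transfer principle to products of free half-waves, apply H\"older, and then evaluate the cone-restricted integral $\int \delta(\tau\mp_1|\eta|\mp_2|\xi-\eta|)\,|\eta|^{-\alpha_1 r}|\xi-\eta|^{-\alpha_2 r}\,d\eta$ using Foschi--Klainerman Lemma~4.3 (elliptic case), Prop.~4.5 (hyperbolic near-cone subcase $|\eta|+|\xi-\eta|\le 2|\xi|$) and Lemma~4.4 (far subcase), with the identical exponent bookkeeping $A=2-(\alpha_1+\alpha_2)r=-\alpha_0 r$, $B=0$, and the far-region tail controlled by $(\alpha_1+\alpha_2)r>3$. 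The one slip is in your justification that the hypotheses force $\alpha_1,\alpha_2<\tfrac{2}{r}$: the claim that ``$\alpha_1=\tfrac{2}{r}$ would force $\alpha_0=\alpha_2$, contradicting $\alpha_2\neq\tfrac{2}{r}$'' is a non-sequitur, since $\alpha_0=\alpha_2$ is perfectly consistent with $\alpha_2\neq\tfrac{2}{r}$. The correct reasoning is that $\alpha_1=\tfrac{2}{r}$ is excluded directly by hypothesis, while $\alpha_1>\tfrac{2}{r}$ would give $\alpha_2=\tfrac{2}{r}+\alpha_0-\alpha_1<\alpha_0$, contradicting $\alpha_0\le\alpha_2$; symmetrically for $\alpha_2$. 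With that repair the argument matches the paper's.
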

\begin{proof}
By similar calculations as before we have to estimate in the elliptic case:
\begin{align*}
I & =|\xi|^{\alpha_0}  ( \int \delta(\tau-|\eta|-|\xi-\eta|) \eta|^{-\alpha_1r} |\xi-\eta|^{-\alpha_2 r} d\eta)^{\frac{1}{r}} \\
& \sim |\xi|^{\alpha_0} \tau^{\frac{A}{r}} ||\tau|-|\xi||^{\frac{B}{r}} \lesssim \tau^{\alpha_0+\frac{2}{r}-(\alpha_1+\alpha_2)} = 1 \, ,
\end{align*}
if we assume from now on $\alpha_1+\alpha_2-\alpha_0 = \frac{2}{r}$ and $\alpha_1,\alpha_2 < \frac{2}{r}$ ,
because by \cite{FK},Lemma 4.3 we obtain $A=\max(\alpha_1 r,\alpha_2 r,2)-(\alpha_1+\alpha_2)r  = 2-(\alpha_1+\alpha_2)r$ and $B= 2- \max(\alpha_1 r,\alpha_2 r,2)=0$ . 

In the hyperbolic case we obtain in the subcase $|\eta |+|\xi-\eta| \le 2|\xi|$
\begin{align*}
I &= (\int \delta(\tau-|\eta|+|\xi-\eta|) |\eta|^{-\alpha_1 r} |\xi-\eta|^{-\alpha_2 r} d\eta)^{\frac{1}{r}} |\xi|^{\alpha_0} \\
& \sim |\xi|^{\frac{A}{r}} ||\xi|-|\tau||^{\frac{B}{r}} |\xi|^{\alpha_0} \lesssim |\xi|^{\frac{2}{r}-(\alpha_1+\alpha_2)+\alpha_0} = 1 \, ,
\end{align*}
where we applied \cite{FK}, Prop. 4.5 with $A=\max(\alpha_1r,2) -(\alpha_1+\alpha_2)r = 2-(\alpha_1+\alpha_2)r$ and $B= 2-\max(\alpha_1 r,2) =0$ provided $\alpha_1< \frac{2}{r}$ and $\alpha_1+\alpha_2-\alpha_0 = \frac{2}{r}$ (or similarly with $\alpha_1$ and $\alpha_2$ interchanged). In the subcase $|\eta|+|\xi-\eta| \ge 2|\xi|$ we obtain by \cite{FK}, Lemma 4.4 the estimate
\begin{align*}
I & \sim (\int_2^{\infty} (|\xi|x+\tau)^{-\alpha_1 r +1} (|\xi|x-\tau)^{-\alpha_2 r+1} dx)^{\frac{1}{r}} |\xi|^{\alpha_0} \\
& \lesssim (\int_2^{\infty} (x+\frac{\tau}{|\xi|})^{-\alpha_1r +1}
(x-\frac{\tau}{|\xi|})^{-\alpha_2 r+1} dx)^{\frac{1}{r}} \, \cdot 
|\xi|^{\frac{2}{r}-(\alpha_1+\alpha_2)+\alpha_0} \lesssim 1 \, .
\end{align*}
Because $|\tau| \le |\xi|$ the integral converges provided $\alpha_1+\alpha_2 > \frac{3}{r}$ . Moreover we used again $\alpha_1+\alpha_2-\alpha_0 = \frac{2}{r}$ .

By the transfer principle the claimed estimate results as before.
\end{proof}

\begin{lemma}
\label{Lemma5}
If $\alpha_1,\alpha_2,b_1,b_2 \ge 0$ , $\alpha_1+\alpha_2 > \frac{3}{r}$ and $b_1+b_2 > \frac{1}{r}$ the following estimate applies:
$$ \|uv\|_{H^r_{0,0}} \lesssim \|u\|_{H^r_{\alpha_1,b_1}} \|v\|_{H^r_{\alpha_2,b_2}} \, . $$
\end{lemma}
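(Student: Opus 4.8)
The plan is to pass to the Fourier side and reduce the assertion to a single uniform bound for a convolution of the defining weights. Set $W_j(\tau,\xi)=\langle\xi\rangle^{\alpha_j}\langle|\tau|-|\xi|\rangle^{b_j}$ for $j=1,2$, so that $\|u\|_{H^r_{\alpha_1,b_1}}=\|W_1\widehat{u}\|_{L^{r'}_{\tau\xi}}$, $\|v\|_{H^r_{\alpha_2,b_2}}=\|W_2\widehat{v}\|_{L^{r'}_{\tau\xi}}$ and $\|uv\|_{H^r_{0,0}}=\|\widehat{u}*\widehat{v}\|_{L^{r'}_{\tau\xi}}$, the convolution being taken in the space-time frequency variable $\zeta=(\tau,\xi)\in\R^{1+3}$. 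Since $|\widehat{u}*\widehat{v}|\le|\widehat{u}|*|\widehat{v}|$ one may assume $\widehat{u},\widehat{v}\ge 0$. For fixed $\zeta$ I write
$$\widehat{u}(\zeta_1)\,\widehat{v}(\zeta-\zeta_1)=\big[(W_1\widehat{u})(\zeta_1)\,(W_2\widehat{v})(\zeta-\zeta_1)\big]\cdot\frac{1}{W_1(\zeta_1)\,W_2(\zeta-\zeta_1)}$$
and apply Hölder's inequality in $\zeta_1$ with the dual exponents $r'$ and $r$:
$$(\widehat{u}*\widehat{v})(\zeta)\le\Big(\int\big[(W_1\widehat{u})(\zeta_1)(W_2\widehat{v})(\zeta-\zeta_1)\big]^{r'}\,d\zeta_1\Big)^{1/r'}\Big(\int\frac{d\zeta_1}{W_1(\zeta_1)^{r}\,W_2(\zeta-\zeta_1)^{r}}\Big)^{1/r}.$$
If $C$ denotes the supremum over $\zeta$ of the second factor, then — since that factor is $\le C$ — raising the inequality to the power $r'$, integrating in $\zeta$ and applying Tonelli's theorem to the resulting convolution yields $\|\widehat{u}*\widehat{v}\|_{L^{r'}}\le C\,\|W_1\widehat{u}\|_{L^{r'}}\|W_2\widehat{v}\|_{L^{r'}}$, which is the claimed estimate. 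Thus the whole lemma is reduced to the uniform weight bound
$$\sup_{\tau,\xi}\int_{\R^{1+3}}\langle\xi_1\rangle^{-\alpha_1 r}\langle|\tau_1|-|\xi_1|\rangle^{-b_1 r}\langle\xi-\xi_1\rangle^{-\alpha_2 r}\langle|\tau-\tau_1|-|\xi-\xi_1|\rangle^{-b_2 r}\,d\tau_1\,d\xi_1<\infty.$$

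To prove this I integrate in $\tau_1$ first. Using $\langle|\tau_1|-|\xi_1|\rangle^{-b_1 r}\le\langle\tau_1-|\xi_1|\rangle^{-b_1 r}+\langle\tau_1+|\xi_1|\rangle^{-b_1 r}$ together with the analogous splitting of the factor $\langle|\tau-\tau_1|-|\xi-\xi_1|\rangle^{-b_2 r}$, the inner $\tau_1$-integral is dominated by a sum of four integrals of the form $\int_{\R}\langle\tau_1-c_1\rangle^{-b_1 r}\langle\tau_1-c_2\rangle^{-b_2 r}\,d\tau_1$ with real numbers $c_1,c_2$ depending on $\tau,\xi,\xi_1$. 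Since $b_1 r,b_2 r\ge 0$ and $b_1 r+b_2 r>1$ — this is precisely where the hypothesis $b_1+b_2>\frac1r$ is used — the elementary one-dimensional convolution estimate for the Japanese bracket $\langle\,\cdot\,\rangle^{-a}$ (as in \cite{FK}, \cite{G}) bounds each of these integrals by a constant, uniformly in $c_1,c_2$. Hence the integral in the displayed bound is $\lesssim\sup_{\xi}\int_{\R^3}\langle\xi_1\rangle^{-\alpha_1 r}\langle\xi-\xi_1\rangle^{-\alpha_2 r}\,d\xi_1$, and the analogous three-dimensional convolution estimate renders this finite because $\alpha_1 r,\alpha_2 r\ge 0$ and $\alpha_1 r+\alpha_2 r>3$ — this is where $\alpha_1+\alpha_2>\frac3r$ enters. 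This gives $C<\infty$ and finishes the proof.

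I do not expect a genuine obstacle here: the argument exploits no null structure and no oscillation, only the integrability of the defining weights, which is why solely the sums $\alpha_1+\alpha_2$ and $b_1+b_2$ (and not the individual exponents) occur in the hypotheses. The only technical ingredients are the standard convolution bounds for $\langle\,\cdot\,\rangle^{-a}$ in one and three dimensions, and the sole point requiring any care is the mild bookkeeping needed to pass from $\langle|\tau_1|-|\xi_1|\rangle$ to $\langle\tau_1\mp|\xi_1|\rangle$ before invoking them; the remainder is routine, and in particular the estimate holds for all $1<r<\infty$, not just $1<r\le 2$.
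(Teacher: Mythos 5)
Your proof is correct, but it takes a genuinely different route from the paper's. The paper estimates $\|\widehat u * \widehat v\|_{L^{r'}_{\tau\xi}}$ by first applying a mixed-norm Young convolution inequality, $\|\widehat u * \widehat v\|_{L^{r'}_{\tau\xi}} \lesssim \|\widehat u\|_{L^{p_1}_\xi L^{q_1}_\tau}\|\widehat v\|_{L^{p_2}_\xi L^{q_2}_\tau}$ with $1+\frac{1}{r'}=\frac{1}{p_1}+\frac{1}{p_2}=\frac{1}{q_1}+\frac{1}{q_2}$, and then applying H\"older on each factor separately to split off the weight, requiring $r_jb_j>1$ and $s_j\alpha_j>3$ for suitable auxiliary exponents $r_j,s_j$ — so each factor's weight must individually be in some $L^{s_j}_\xi L^{r_j}_\tau$, with the exponent sums only constrained globally by $\alpha_1+\alpha_2>\frac{3}{r}$ and $b_1+b_2>\frac{1}{r}$. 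You instead perform the H\"older step pointwise inside the convolution integral (a Schur-type test), reducing everything to the single uniform weight bound
\[
\sup_{\tau,\xi}\int\langle\xi_1\rangle^{-\alpha_1 r}\langle|\tau_1|-|\xi_1|\rangle^{-b_1 r}\langle\xi-\xi_1\rangle^{-\alpha_2 r}\langle|\tau-\tau_1|-|\xi-\xi_1|\rangle^{-b_2 r}\,d\tau_1\,d\xi_1<\infty,
\]
which you verify by the standard one- and three-dimensional convolution estimates for $\langle\cdot\rangle^{-a}$. Both arguments rely on the same underlying integrability facts, but your version has two small advantages: it keeps the two weights coupled, so no individual finiteness of auxiliary exponent products is needed and degenerate cases such as $b_1=0$ or $\alpha_1=0$ are handled without any case distinction or $\infty$-exponent bookkeeping; and it makes manifest, as you point out, that the estimate holds for all $1<r<\infty$. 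The paper's version is slightly shorter to state because it offloads the work to Young's inequality. Either is acceptable; yours is arguably cleaner and more self-contained.
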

\begin{proof}
By Young's and H\"older's inequalities we obtain
\begin{align*}
\|uv\|_{X^r_{0,0}} &= \|\widehat{uv}\|_{L^{r'}_{\tau \xi}} \lesssim \|\widehat{u}\|_{L^{p_1}_{\xi} L^{q_1}_{\tau}} \|\widehat{v}\|_{L^{p_2}_{\xi} L^{q_2}_{\tau}} \\
& \lesssim \| \langle \xi \rangle^{-\alpha_1} \langle|\tau|-|\xi|\rangle^{-b_1}\|_{L^{s_1}_{\xi} L^{r_1}_{\tau}} \| \langle \xi \rangle^{\alpha_1} \langle|\tau|-|\xi|\rangle^{b_1} \widehat{u}\|_{L^{r'}_{\xi \tau}} \, \cdot \\
& \; \;\cdot  \| \langle \xi \rangle^{-\alpha_2} \langle|\tau|-|\xi|\rangle^{-b_2}\|_{L^{s_2}_{\xi} L^{r_2}_{\tau}} \| \langle \xi \rangle^{\alpha_2} \langle|\tau|-|\xi|\rangle^{b_2} \widehat{v}\|_{L^{r'}_{\xi \tau}} \\
& \lesssim \|u\|_{H^r_{\alpha_1,b_1}} \|v\|_{H^r_{\alpha_2,b_2}} \, .
\end{align*}
Here $1+\frac{1}{r'} = \frac{1}{p_1}+\frac{1}{p_2} = \frac{1}{q_1}+\frac{1}{q_2}$ and $\frac{1}{q_j} = \frac{1}{r_j} + \frac{1}{r'}$ , $\frac{1}{p_j} = \frac{1}{s_j}+\frac{1}{r'} $ . This implies $1+\frac{1}{r'}= \frac{1}{s_1}+\frac{1}{s_2} + \frac{2}{r'} = \frac{1}{r_1}+\frac{1}{r_2}+\frac{2}{r'}$ . For the last estimate we need $r_j b_j > 1$ and $s_j \alpha_j > 3 $ . This can be fulfilled, if $1+\frac{1}{r'} < \frac{\alpha_1+\alpha_2}{3}+ \frac{2}{r'} \, \Leftrightarrow \, \alpha_1+\alpha_2 > \frac{3}{r}$ and $1+\frac{1}{r'} < b_1+b_2+\frac{2}{r'} \, \Leftrightarrow \, b_1+b_2 > \frac{1}{r} $ .
\end{proof}
An immediate consequence of Lemma \ref{Lemma4} and Lemma \ref{Lemma5} is the following corollary.
\begin{Cor}
\label{Cor.2}
Let $0 \le \alpha_0\le \alpha_1,\alpha_2 $ , $\alpha_1,\alpha_2 \neq \frac{2}{r}$ , $\alpha_1+\alpha_2-\alpha_0 \ge \frac{2}{r}$ , $\alpha_1+\alpha_2 > \frac{3}{r}$ and $b > \frac{1}{r}$. Then the following estimate applies:
$$\|uv\|_{H^r_{\alpha_0,0}} \lesssim \|u\|_{X^r_{\alpha_1,b,\pm_1}} \|v\|_{X^r_{\alpha_2,b,\pm_2}} \, . $$
\end{Cor}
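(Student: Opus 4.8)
The plan is to prove Corollary \ref{Cor.2} by splitting the output weight $\langle\xi\rangle^{\alpha_0}$ into a low-frequency and a high-frequency piece and handing the two resulting estimates to Lemma \ref{Lemma5} and Lemma \ref{Lemma4} respectively. Since $\langle\xi\rangle^{\alpha_0}\lesssim 1+|\xi|^{\alpha_0}$ for $\alpha_0\ge 0$, the claimed estimate reduces to
\[
\|uv\|_{H^r_{0,0}}+\|uv\|_{\dot H^r_{\alpha_0,0}}\lesssim\|u\|_{X^r_{\alpha_1,b,\pm_1}}\|v\|_{X^r_{\alpha_2,b,\pm_2}} .
\]

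For the first summand I would use Lemma \ref{Lemma5} with $b_1=b_2=b$, which is admissible because $\alpha_1+\alpha_2>\frac{3}{r}$ and $2b>\frac{2}{r}>\frac{1}{r}$; this gives $\|uv\|_{H^r_{0,0}}\lesssim\|u\|_{H^r_{\alpha_1,b}}\|v\|_{H^r_{\alpha_2,b}}$. One then passes to the $X^r_{\cdot,\cdot,\pm}$-norms via the elementary pointwise inequality $\abs{|\tau|-|\xi|}\le\abs{\tau\pm|\xi|}$ (valid for either sign), which together with $b\ge 0$ yields $\|w\|_{H^r_{\sigma,b}}\le\|w\|_{X^r_{\sigma,b,\pm}}$.

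For the second summand the point is that Lemma \ref{Lemma4} is the genuinely bilinear estimate exploiting the wave modulation, and so only needs the balance $\tilde\alpha_1+\tilde\alpha_2-\alpha_0=\frac{2}{r}$ rather than the $>\frac{3}{r}$ a pure product rule would require. Since the hypothesis gives only $\alpha_1+\alpha_2-\alpha_0\ge\frac{2}{r}$, I would first lower the input exponents to $\tilde\alpha_i\le\alpha_i$ with $\tilde\alpha_1+\tilde\alpha_2-\alpha_0=\frac{2}{r}$, which is possible inside the constraints $\alpha_0\le\tilde\alpha_i$ because $\alpha_0\le\alpha_1,\alpha_2$ and $\alpha_1+\alpha_2-\alpha_0\ge\frac{2}{r}$; if $\alpha_0$ is so small that $\frac{2}{r}+\alpha_0\le\frac{3}{r}$ I would simultaneously enlarge the output exponent to some $\alpha_0'\in(\frac{1}{r},\frac{2}{r}]$ with $\alpha_0\le\alpha_0'\le\alpha_1+\alpha_2-\frac{2}{r}$ — legitimate on $\{|\xi|\ge 1\}$ since $|\xi|^{\alpha_0}\le|\xi|^{\alpha_0'}$ there, and available because $\alpha_1+\alpha_2>\frac{3}{r}$. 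The remaining hypotheses of Lemma \ref{Lemma4} are then satisfied, the exclusion $\tilde\alpha_i\neq\frac{2}{r}$ being arranged by a small perturbation within the one-parameter family of admissible splittings, and it gives $\|uv\|_{\dot H^r_{\alpha_0,0}}\le\|uv\|_{\dot H^r_{\alpha_0',0}}\lesssim\|u\|_{\dot X^r_{\tilde\alpha_1,b,\pm_1}}\|v\|_{\dot X^r_{\tilde\alpha_2,b,\pm_2}}$. Finally $\|u\|_{\dot X^r_{\tilde\alpha_1,b,\pm_1}}\le\|u\|_{X^r_{\alpha_1,b,\pm_1}}$ because $|\xi|^{\tilde\alpha_1}\le\langle\xi\rangle^{\tilde\alpha_1}\le\langle\xi\rangle^{\alpha_1}$, and similarly for $v$.

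The one step that is not purely mechanical is this last exponent bookkeeping: one has to check that a single admissible choice of $(\alpha_0',\tilde\alpha_1,\tilde\alpha_2)$ meets all constraints of Lemma \ref{Lemma4} simultaneously inside the box cut out by the available regularities; I expect this to be the main (minor) obstacle. In particular, when $\alpha_0>\frac{2}{r}$ one should first peel a factor $\Lambda^{\alpha_0-\frac{2}{r}+}$ off $u$ or $v$, using $|\xi|^{\alpha_0-\frac{2}{r}+}\lesssim|\eta|^{\alpha_0-\frac{2}{r}+}+|\xi-\eta|^{\alpha_0-\frac{2}{r}+}$, so as to reduce the output exponent below $\frac{2}{r}$ before invoking Lemma \ref{Lemma4}. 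Everything else is already contained in the proofs of Lemmas \ref{Lemma4} and \ref{Lemma5}.
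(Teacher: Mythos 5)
Your high-level plan --- decompose $\langle\xi\rangle^{\alpha_0}\lesssim 1_{\{|\xi|\le1\}}+|\xi|^{\alpha_0}1_{\{|\xi|\ge1\}}$, handle the low-frequency piece with Lemma \ref{Lemma5} and the high-frequency piece with Lemma \ref{Lemma4} after shifting exponents --- is exactly the route the paper intends (it simply states the corollary is ``an immediate consequence'' of those two lemmas and gives no further detail), and the auxiliary steps you invoke ($\langle|\tau|-|\xi|\rangle\le\langle\tau\pm|\xi|\rangle$, and $|\xi|^{\alpha}\le\langle\xi\rangle^{\alpha}$ for $\alpha\ge0$) are correct.

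However, the ``exponent bookkeeping'' you flag as a minor obstacle is a genuine gap that your proposal does not close. To apply Lemma \ref{Lemma4} on $\{|\xi|\ge1\}$ you must produce $(\alpha_0',\tilde\alpha_1,\tilde\alpha_2)$ with $\alpha_0'\ge\alpha_0$, $\tilde\alpha_i\le\alpha_i$, $\alpha_0'\le\tilde\alpha_1,\tilde\alpha_2$, $\tilde\alpha_1+\tilde\alpha_2-\alpha_0'=\frac{2}{r}$ and $\tilde\alpha_1+\tilde\alpha_2>\frac{3}{r}$; the last two force $\alpha_0'>\frac{1}{r}$. But the hypotheses of Corollary \ref{Cor.2} permit $\min(\alpha_1,\alpha_2)\le\frac{1}{r}$ while $\max(\alpha_1,\alpha_2)>\frac{2}{r}$: for instance $r=1.01$, $\alpha_1=2.5$, $\alpha_2=0.9$, $\alpha_0=0.5$ satisfies every stated hypothesis, and then $\alpha_0'\le\tilde\alpha_2\le\alpha_2<\frac{1}{r}$ is incompatible with $\alpha_0'>\frac{1}{r}$, so no admissible shift exists and Lemma \ref{Lemma4} is unusable. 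Falling back on Lemma \ref{Lemma5} alone via the fractional Leibniz rule requires $\alpha_1+\alpha_2-\alpha_0>\frac{3}{r}$, and in the same example $\alpha_1+\alpha_2-\alpha_0=2.9<\frac{3}{r}\approx 2.97$, so that route also fails. Your peeling remark for $\alpha_0>\frac{2}{r}$ does not touch this regime since here $\alpha_0<\frac{1}{r}$. Thus the proposal proves the corollary only under an implicit additional restriction (e.g.\ $\alpha_1,\alpha_2<\frac{2}{r}$, which is what the hypotheses of Lemma \ref{Lemma4} actually force, or $\alpha_1+\alpha_2-\alpha_0>\frac{3}{r}$); for the full range of exponents in the statement a further argument is needed, and this is a point the paper itself glosses over.
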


\begin{lemma}
\label{Lemma6}
Let $0 \le \alpha_0 \le \alpha_1,\alpha_2$ and  $\alpha_1+\alpha_2-\alpha_0 \ge \frac{2}{r}+b$ . Then the following estimate applies:
$$ \|uv\|_{H^r_{\alpha_0,b}} \lesssim
 \|u\|_{X^r_{\alpha_1,b}} \|v\|_{X^r_{\alpha_2,b}} \, . $$
\end{lemma}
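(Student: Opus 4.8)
The plan is to transfer the output modulation weight $\langle|\tau|-|\xi|\rangle^{b}$ onto the two input factors and then to invoke Lemma~\ref{Lemma5} and the Foschi--Klainerman estimates underlying Lemmas~\ref{Lemma1}--\ref{Lemma4}.

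Since the spaces are inhomogeneous we may assume $\alpha_1+\alpha_2-\alpha_0=\tfrac2r+b$, and by splitting $u=u_++u_-$, $v=v_++v_-$ it is enough to estimate each product $u_{\pm_1}v_{\pm_2}$. On the Fourier support of such a product, writing $\xi=\eta+\zeta$ and $\tau=\tau_1+\tau_2$, elementary triangle inequalities yield
\[
 \langle|\tau|-|\xi|\rangle\ \lesssim\ \langle\tau_1\pm_1|\eta|\rangle+\langle\tau_2\pm_2|\zeta|\rangle+\min(\langle\eta\rangle,\langle\zeta\rangle),
\]
so $\langle|\tau|-|\xi|\rangle^{b}$ is dominated by the sum of the $b$-th powers of these three quantities. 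I would split $\|u_{\pm_1}v_{\pm_2}\|_{H^r_{\alpha_0,b}}$ accordingly, also using $\langle\xi\rangle^{\alpha_0}\lesssim\langle\eta\rangle^{\alpha_0}+\langle\zeta\rangle^{\alpha_0}$.

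In the contributions of $\langle\tau_1\pm_1|\eta|\rangle^{b}$ (and, symmetrically, $\langle\tau_2\pm_2|\zeta|\rangle^{b}$) this weight is absorbed into $u$, which then lies merely in $X^r_{\alpha_1,0,\pm_1}$ while $v$ retains its modulation $b>\tfrac1r$; after moving $\langle\xi\rangle^{\alpha_0}$ onto one factor, the remaining estimate is exactly of the form treated in Lemma~\ref{Lemma5}, and its hypotheses hold because $b>\tfrac1r$ and $\alpha_1+\alpha_2-\alpha_0=\tfrac2r+b>\tfrac3r$. The genuinely sharp contribution is that of $\min(\langle\eta\rangle,\langle\zeta\rangle)^{b}$: here I would keep the modulation weights $b$ on both $u$ and $v$ and run the Foschi--Klainerman argument as in Lemmas~\ref{Lemma1}--\ref{Lemma4}. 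After the reduction by the transfer principle (Prop.~\ref{Prop.0.1}) to free waves $u=e^{\pm_1 itD}u_0$, $v=e^{\pm_2 itD}v_0$, the convolution kernel is supported on the surface $\tau=\pm_1|\eta|\pm_2|\zeta|$, on which $|\tau|-|\xi|$ depends only on the outer variables $(\tau,\xi)$; hence $\langle|\tau|-|\xi|\rangle^{b}$ factors out of the $\eta$-integral and merely multiplies the Foschi--Klainerman bound, the incurred loss $\langle|\tau|-|\xi|\rangle^{b}\lesssim\min(\langle\eta\rangle,\langle\zeta\rangle)^{b}$ being exactly the $b$ extra derivatives that the hypothesis $\alpha_1+\alpha_2-\alpha_0=\tfrac2r+b$ puts at our disposal. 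Where $\langle|\tau|-|\xi|\rangle$ is bounded (the configuration $\eta\approx-\zeta$ in the hyperbolic case, or $\min(\langle\eta\rangle,\langle\zeta\rangle)\lesssim1$) there is no loss at all, and the piece is already covered by Corollary~\ref{Cor.2} with the original exponents, the condition $\alpha_1+\alpha_2>\tfrac3r$ again being satisfied.

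The hard part is this last, Foschi--Klainerman, piece: carrying the weight $\langle|\tau|-|\xi|\rangle^{b}$ through the Foschi--Klainerman lemmas (\cite{FK}, Lemmas 4.3--4.4, Prop.~4.5) and verifying that the exponents still balance, and dealing with the borderline exponent configurations --- the excluded values $\alpha_j=\tfrac2r$ and the boundary relation $\alpha_1+\alpha_2-\alpha_0=\tfrac2r+b$ --- which have to be handled by a small perturbation of $(\alpha_1,\alpha_2)$, falling back on Lemma~\ref{Lemma5} in the cases where such a perturbation would violate $\alpha_0\le\alpha_j$.
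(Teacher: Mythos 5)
Your skeleton --- split off the output modulation weight, reduce to free waves by the transfer principle, invoke Foschi--Klainerman, handle the two ``inherited modulation'' pieces via Lemma~\ref{Lemma5} --- is the right one and matches the paper, including the final reduction to Lemma~\ref{Lemma5} using $\alpha_1+\alpha_2-\alpha_0\ge\frac2r+b>\frac3r$. The gap is in the third piece of your decomposition, which is where the whole weight of the lemma sits.

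You replace the sharp hyperbolic Leibniz term $b_{\pm}(\xi,\eta)$ of \cite{AFS} (with $b_+(\xi,\eta)=|\eta|+|\xi-\eta|-|\xi|$, $b_-(\xi,\eta)=|\xi|-\big||\eta|-|\xi-\eta|\big|$) by the cruder bound $\min(\langle\eta\rangle,\langle\zeta\rangle)$, and you interpret the hypothesis $\alpha_1+\alpha_2-\alpha_0\ge\frac2r+b$ as providing ``$b$ extra derivatives'' to absorb that factor. That mechanism does not close. The paper keeps $b_{\pm}(\xi,\eta)^b$ as an input-side Fourier multiplier (so Prop.~\ref{Prop.0.1} is applicable --- note that the transfer principle, as stated, does not accommodate an output weight $\langle|\tau|-|\xi|\rangle^b$, so your phrase ``after the transfer principle, $\langle|\tau|-|\xi|\rangle^b$ factors out'' is not a licit step on its own) and then uses that on the free-wave surface one has exactly $b_\pm(\xi,\eta)=\big||\tau|-|\xi|\big|$. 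Thus the prefactor $\big||\tau|-|\xi|\big|^b$ comes out and cancels, term by term, the singular power $\big||\tau|-|\xi|\big|^{B/r}$ with $B=-br$ that Foschi--Klainerman produces precisely because $\alpha_1=\frac2r+b$; the ``extra'' $b$ is not paid out to absorb the weight, it generates the cancelling exponent and, separately, makes the far hyperbolic integral $\int_2^\infty(x+\tau/|\xi|)^{-1-br}(x-\tau/|\xi|)\,dx$ converge. If instead you pay the $b$ to absorb $\min(\langle\eta\rangle,\langle\zeta\rangle)^b$, you are left with exponents summing to $\frac2r$, and then $\alpha_1'+\alpha_2'\le\frac3r$ whenever $\alpha_0\le\frac1r$ (in particular in the reduced case $\alpha_0=0$, $\alpha_1=\frac2r+b$, $\alpha_2=0$: either $\alpha_1\mapsto\frac2r$, the excluded value, with the far hyperbolic integral $\int_2^\infty(x+\tau/|\xi|)^{-1}(x-\tau/|\xi|)\,dx$ divergent, or $\alpha_2\mapsto-b<0$, inadmissible), so Corollary~\ref{Cor.2}/Lemma~\ref{Lemma4} are not available. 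The fix is exactly what the paper does: keep the precise $b_\pm$ piece of the hyperbolic Leibniz rule, transfer with no output weight, and exploit the identity $b_\pm=\big||\tau|-|\xi|\big|$ on the free-wave surface rather than bounding it by $\min$.
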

\begin{proof}
	By the fractional Leibniz rule we may assume $\alpha_0=0$ and by interpolation we only have to consider the case $\alpha_1=\frac{2}{r}+b$ , $\alpha_2=0$ .
We apply the "hyperbolic Leibniz rule" (cf. \cite{AFS}, p. 41):
\begin{equation}
\label{HLR}
 ||\tau|-|\xi|| \lesssim ||\rho|-|\eta|| + ||\tau - \rho| - |\xi-\eta|| + b_{\pm}(\xi,\eta) \, , 
\end{equation}
where
 $$ b_+(\xi,\eta) = |\eta| + |\xi-\eta| - |\xi| \quad , \quad b_-(\xi,\eta) = |\xi| - ||\eta|-|\xi-\eta|| \, . $$

Let us first consider the term $b_{\pm}(\xi,\eta)$ in (\ref{HLR}). Decomposing as before $uv=u_+v_++u_+v_-+u_-v_++u_-v_-$ , where $u_{\pm}(t)= e^{\pm itD} f , v_\pm(t) = e^{\pm itD} g$  , we use
$$ \widehat{u}_{\pm}(\tau,\xi) = c \delta(\tau \mp |\xi|) \widehat{f}(\xi) \quad , \quad  \widehat{v}_{\pm}(\tau,\xi) = c \delta(\tau \mp |\xi|) \widehat{g}(\xi)$$
and have to estimate
\begin{align*}
&\| \int  b^{b}_{\pm}(\xi,\eta) \delta(\tau - |\eta| \mp |\xi-\eta|) \widehat{f}(\xi) \widehat{g}(\xi-\eta)d\eta \|_{L^{r'}_{\tau \xi}} \\
& = \| \int ||\tau|-|\xi||^b \delta(\tau - |\eta| \mp |\xi-\eta|) \widehat{f}(\xi) \widehat{g}(\xi-\eta) d\eta \|_{L^{r'}_{\tau \xi}} \\
& \lesssim \sup_{\tau,\xi} I \, \|\widehat{D^{\frac{2}{r}+b} f}\|_{[L^{r'}}
\|\widehat{g}\|_{[L^{r'}} \, .
\end{align*}
Here we used H\"older's inequality, where
$$I = ||\tau|-|\xi||^b  (\int \delta(\tau-|\eta|\mp|\xi-\eta|) |\eta|^{-2-b r} d\eta)^{\frac{1}{r}} \, . $$
In order to obtain $I \lesssim 1$ we first consider the elliptic case $\pm_1=\pm_2=+$ and use \cite{FK}, Prop. 4.3. This 
$$ I \sim ||\tau|-|\xi||^b  \tau^{\frac{A}{r}} ||\tau|-|\xi||^{\frac{B}{r}} = ||\tau|-|\xi||^b ||\tau|-|\xi||^{-b} = 1$$
with $A=\max(2+br,2)-(2+br)=0$ and $B=2-\max(2+br,2)= -br$ .

Next we consider the hyperbolic case $\pm_1 = + \, , \, \pm_2=-$ . \\
First we assume $|\eta|+|\xi-\eta| \le 2 |\xi|$ and use \cite{FK}, Prop. 4.5 which gives
$$\int \delta(\tau - |\eta| + |\xi-\eta|) |\eta|^{-2-br}  d\eta \sim |\xi|^A ||\xi|-|\tau||^B \, , $$
where $A=\max(0,2)-(2+br)= -br$ , $B=2-\max(0,2)=0$ , if $0 \le \tau\le |\xi|$ , and $A=\max(2+br,2)-(2+br)=0$ , $B=2-\max(2+br,2)=-br$ , if $-|\xi| \le \tau \le 0$. In either case we easily obtain  $I \lesssim 1$ .\\
Next we assume $|\eta|+|\xi-\eta| \ge 2|\xi|$ , use \cite{FK}, Lemma 4.4 and obtain
\begin{align*}
&\int \delta(\tau-|\eta|-|\xi-\eta|) |\eta|^{-2-br} d\eta \\
& \sim \int_2^{\infty} (|\xi|x + \tau)^{-1-br} (|\xi|x-\tau) dx \\
& \sim \int_2^{\infty} (x+\frac{\tau}{|\xi|})^{-1-br} (x-\frac{\tau}{|\xi|}) dx \,\cdot |\xi|^{-br}  \, .
\end{align*}
This integral converges, because $\tau \le |\xi|$ and $b >\frac{1}{r}$ .This implies
$$I \lesssim  ||\tau|-|\xi||^b |\xi|^{-b} \lesssim  1 \, , $$
using $|\tau| \le |\xi|$ .

By the transfer principle we obtain
$$\| B_{\pm}^b (u,v)\|_{X^r_{0,0}} \lesssim \|u\|_{X^r_{\frac{2}{r}+b,b}} \|v\|_{X^r_{0,b}} \, .$$ 
 Here $B^b_{\pm}$ denotes the operator with Fourier symbol $b_{\pm}$ . \\
Consider now the term $||\rho|-|\eta||$ (or similarly $||\tau-\rho|-|\xi-\eta||$) in (\ref{HLR}). We have to prove
$$ \|D^{\alpha_0}((D_-^bu)v)\|_{X^r_{0,0}} \lesssim \|u\|_{X^r_{\alpha_1,b}} \|v\|_{X^r_{\alpha_2,b}} \, , $$
which is implied by
$$\|D^{\alpha_0}(uv)\|_{X^r_{0,0}} \lesssim \|u\|_{X^r_{\alpha_1,0}} \|v\|_{X^r_{\alpha_2,b}} \, . $$
After application of the fractional Leibniz rule this results from Lemma \ref{Lemma5}, because $\alpha_1+\alpha_2-\alpha_0 \ge \frac{2}{r} +b > \frac{3}{r}$ , which completes the proof. 
\end{proof}

\section{ Proof of (\ref{21}) - (\ref{37}) and Theorem \ref{Theorem1.1}:}

\begin{proof}[Proof of (\ref{21}) - (\ref{37})]
We prove the estimates first for the case $r=1+$ by the results of chapter 5. We assume $2 \ge s \ge 1+\frac{1}{r}$ , $l=1$ and $b=a=\frac{1}{r}+$ . Later we interpolate with the case $r=2$ given by Tesfahun \cite{T}. \\
{\bf Proof of (\ref{21}) and (\ref{22}):} This reduces to
$$\|q(u,v)\|_{H^r_{s-1,0}} \lesssim \|u\|_{X^r_{s,b}} \|u\|_{X^r_{s-1,b}} \, . $$
By the fractional Leibniz rule this results from Lemma \ref{Lemma1} or Lemma \ref{Lemma2} for $ s > \frac{2}{r}$. \\ 
{\bf Proof of (\ref{23}):} This reduces to
$$\|q(u,v)\|_{H^r_{l-1,0}} \lesssim \|u\|_{X^r_{s,b}} \|v\|_{X^r_{l-1,b}} \, . $$
If $l = 1$ and $s > \frac{2}{r}$ this results from Lemma \ref{Lemma1} or Lemma \ref{Lemma2}. \\
{\bf Proof of (\ref{24}) and (\ref{25}):} Concerning (\ref{24}) we have to show
$$\|q(u,v)\|_{H^r_{0,0}} \lesssim \|u\|_{X^r_{s-1,b}} \|u\|_{X^r_{s-1,b}} \, . $$
This is implied by Lemma \ref{Lemma1} or Lemma \ref{Lemma2}, if $s \ge 1+\frac{1}{r}$, which is fulfilled. Similarly (\ref{25}) follows from Lemma \ref{Lemma3}.\\
{\bf Proof of (\ref{26}):} By Lemma \ref{Lemma2.1} we have to prove
$$\|q_{ij}(u,\Lambda v)\|_{X^r_{s-1,0}} \lesssim \|u\|_{X^r_{s,b}} \|v\|_{X^r_{s,b}} \, $$
which by the fractional Leibniz rule results from Lemma \ref{Lemma1}. Moreover we need
$$\|q_{0}(u,\Lambda v)\|_{X^r_{s-1,0}} \lesssim \|u\|_{X^r_{s,b}} \|v\|_{X^r_{s,b}} \, $$
which is given by Lemma \ref{Lemma3}. Finally
$$ \| (\Lambda^{-2}u)v\|_{X^r_{s-1,0}} + \| u(\Lambda^{-2}v)\|_{X^r_{s-1,0}} \lesssim \|u\|_{X^r_{s,b}} \|v\|_{X^r_{s-1,b}}$$
by Lemma \ref{Lemma5} for $ s+2 > \frac{3}{r}$ , which is fulfilled. \\
{\bf Proof of (\ref{27}) and (\ref{28}):} The estimates result from Lemma \ref{Lemma5}, because $s+2 > \frac{3}{r}$. \\
{\bf Proof of (\ref{29}):} We reduce to
$$\|uv\|_{H^r_{s-1,0}} \lesssim \|u\|_{X^r_{l+1,b}} \|v\|_{X^r_{l,b}} \, . $$
This is implied by Cor. \ref{Cor.2} with parameters $\alpha_0 = s-1$ , $\alpha_1=l+1=2$ , $\alpha_2 =l=1$, so that $\alpha_1+\alpha_2 = 3 > \frac{3}{r}$ and $\alpha_1+\alpha_2-\alpha_0 = 4-s \ge \frac{2}{r}$ for $s \le 2$ . \\
{\bf Proof of (\ref{30}):} The estimate reduces to
$$\|uv\|_{H^r_{l-1,0}} \lesssim \|u\|_{X^r_{s+2,b}} \|v\|_{X^r_{l-1,b}} \, , $$
which results from Lemma \ref{Lemma5}, because $s+2 >\frac{3}{r}$ .\\
{\bf Proof of (\ref{31}):} We reduce to
$$\|uv\|_{H^r_{l-1,0}} \lesssim \|u\|_{X^r_{s+1,b}} \|v\|_{X^r_{s-1,b}} \, , $$
which results from Lemma \ref{Lemma5}, because $2s >\frac{3}{r}$ . \\
{\bf Proof of (\ref{32}):} The estimate
$$\|uvw\|_{H^r_{s-1,0}} \lesssim \|u\|_{X^r_{l+1,b}} \|v\|_{X^r_{s,b}}  \|w\|_{X^r_{s,b}}$$
reduces by the fractional Leibniz rule to the estimates
$$\|uvw\|_{X^r_{0,0}} \lesssim \|u\|_{X^r_{l-s+2,b}} \|v\|_{X^r_{s,b}}  \|w\|_{X^r_{s,b}}$$
and
$$\|uvw\|_{X^r_{0,0}} \lesssim \|u\|_{X^r_{l+1,b}} \|v\|_{X^r_{1,b}}  \|w\|_{X^r_{s,b}} \, .$$
Now we obtain by Lemma \ref{Lemma5} :
\begin{align*}
\|uvw\|_{X^r_{0,0}} &\lesssim \|u\|_{X^r_{l-s+2,b}} \|vw\|_{X^r_{s-l-2+\frac{3}{r}+,0}} \\
& \lesssim \|u\|_{X^r_{l-s+2,b}} \|v\|_{X^r_{s,b}}  \|w\|_{X^r_{s,b}} \, .
\end{align*}
The last estimate results from Cor. \ref{Cor.2}, if $2s-(s-l-2+\frac{3}{r}) > \frac{2}{r} \, \Leftrightarrow \, s > \frac{5}{r}-l-2= \frac{5}{r}-3$  and $2s > \frac{3}{r}$ , which is fulfilled for $s > \frac{2}{r}$ . Moreover in exactly the same way
we obtain
\begin{align*}
\|uvw\|_{X^r_{0,0}} &\lesssim \|u\|_{X^r_{l+1,b}} \|vw\|_{X^r_{\frac{3}{r}-l-1+,0}} \\
& \lesssim \|u\|_{X^r_{l+1,b}} \|v\|_{X^r_{1,b}}  \|w\|_{X^r_{s,b}} \, .
\end{align*}
{\bf Proof of (\ref{33}):} We apply Lemma \ref{Lemma5} and Cor. \ref{Cor.2} which implies
$$\|u \Lambda^{-1}(vw)\|_{H^r_{s-1,0}} \lesssim \|u\|_{X^r_{l,b}} \|\Lambda^{-1}(vw)\|_{X^r_{\frac{3}{r}+s-1-l+,0}} \lesssim \|u\|_{X^r_{l,b}} \|v\|_{X^r_{s,b}} \|w\|_{X^r_{s,b}} \, $$
for $2s- \frac{3}{r}-s+1+l+1 > \frac{2}{r} \, \Leftrightarrow s > \frac{5}{r}-3$ , which is fulfilled for $ s > \frac{2}{r}$ . \\
{\bf Proof of (\ref{35}):} Lemma \ref{Lemma5} and Cor. \ref{Cor.2} imply
$$\|u vw\|_{H^r_{s-1,0}} \lesssim \|u\|_{X^r_{s,b}} \|vw\|_{X^r_{\frac{3}{r}-1+,0}} \lesssim \|u\|_{X^r_{s,b}} \|v\|_{X^r_{s,b}} \|w\|_{X^r_{s,b}} \, , $$
because $2s-\frac{3}{r}+1 > \frac{2}{r}$ for $s > \frac{2}{r}$ , and $2s >\frac{3}{r}$ . \\
{\bf Proof of (\ref{36}):} Similarly we obtain
$$\|u vw\|_{H^r_{l-1,0}} \lesssim \|w\|_{X^r_{l,b}} \|uv\|_{X^r_{\frac{3}{r}-1+,0}} \lesssim \|w\|_{X^r_{l,b}} \|u\|_{X^r_{s,b}} \|v\|_{X^r_{s,b}} \, , $$
because $2s-\frac{3}{r}+1 > \frac{2}{r} \, \Leftrightarrow \, 2s >\frac{5}{r}-1$  , which is fulfilled for $s > \frac{2}{r}$ , and $2s > \frac{3}{r}$ .  \\
{\bf Proof of (\ref{34}):} We obtain
\begin{align*}
\|\Lambda^{-1}(uv)wz\|_{H^r_{s-1,0}} & \lesssim \|\Lambda^{-1}(uv)\|_{H^r_{\frac{3}{r}+s-1-(\frac{2}{r}+2-s)+,b}} \|wz\|_{H^r_{\frac{2}{r}+2-s,0}} \\
& \lesssim \|uv\|_{H^r_{\frac{1}{r}+2s-4+,b}} \|wz\|_{\frac{2}{r}+2-s,0} \\
& \lesssim \|u\|_{X^r_{s,b}} \|v\|_{X^r_{s,b}} \|w\|_{X^r_{s,b}} \|z\|_{X^r_{s,b}} \, .
\end{align*}
We applied Lemma \ref{Lemma5} first, then Lemma \ref{Lemma6} with parameters $\alpha_1=\alpha_2=s$, $\alpha_0 =\frac{1}{r}+2s-4$ , so that $\alpha_1+\alpha_2-\alpha_0 = 4-\frac{1}{r}  > \frac{2}{r}+b $ and $\alpha_0 \le \alpha_1=\alpha_2$ . Finally Cor. \ref{Cor.2} is used with parameters $\alpha_0= \frac{2}{r}+2-s$ , $\alpha_1=\alpha_2=s$ , so that $\alpha_0 \le \alpha_1,\alpha_2$ assuming that $s \ge 1+\frac{1}{r}$ , and also $\alpha_1+\alpha_2-\alpha_0 \ge \frac{2}{r}$ , which is equivalent to  $s \ge \frac{2}{3}+\frac{4}{3r}$ , which holds for $s \ge 1+\frac{1}{r}$ . We also obtain $\alpha_1+\alpha_2 >\frac{3}{r}$ for $s >\frac{3}{2r}$ , which is fulfilled, as well as $\alpha_1=\alpha_2 = s \neq \frac{2}{r}$ .\\
{\bf Proof of (\ref{37}):} We estimate as follows:
\begin{align*}
\|uvwz\|_{H^r_{0,0}} & \lesssim \|uv\|_{H^r_{2s-\frac{3}{r}-,b}} \|wz\|_{H^r_{\frac{6}{r}-2s+,0}} \\
& \lesssim \|u\|_{X^r_{s,b}} \|v\|_{X^r_{s,b}} \|w\|_{X^r_{s,b}}\|z\|_{X^r_{s,b}} \, .
\end{align*}
We applied Lemma \ref{Lemma5} for the first step, using $\frac{6}{r}-2s+2s-\frac{3}{r} +> \frac{3}{r}$ , and for the second step Lemma \ref{Lemma6} , using $2s-(2s-\frac{3}{r})+ > \frac{3}{r}$, and Cor. \ref{Cor.2} , using $2s-\frac{6}{r}+2s > \frac{2}{r} \, \Leftrightarrow \, s > \frac{2}{r}$ .
\end{proof}

\begin{proof}[Proof of Theorem \ref{Theorem1.1}]
Now we recall the results of Tesfahun \cite{T}, who proved the same estimates in the case $r=2$ with $s=\frac{6}{7}+\epsilon$ , $l=-\frac{1}{14}+\epsilon$ , $a=\half+$ , $b= \frac{3}{4}+$ for $ \epsilon > 0$ . Thus we may apply bilinear interpolation between the case $r=1+$ and $r=2$ . 

Let $\delta > 0$ be given and $s=\frac{16}{7r}-\frac{2}{7}+\delta$ , $ l=\frac{15}{7r} -\frac{8}{7}+\delta$ . Then for $r>1$ sufficiently close to 1 we have $\delta \ge \frac{15}{7}-\frac{15}{7r} + \omega$ with $\omega \ge 0$ . In the case $\omega=0$ we have $s= \frac{13}{7}+\frac{1}{7r}$ and $l=1$ . Because in this case $ s \ge 1+\frac{1}{r}$ , we have proven that the estimates (\ref{21}) - (\ref{37}) are true. By the fractional Leibniz rule they remain true for $\omega > 0$ , thus for the given $\delta$ and $r > 1$ close enough to 1.
Bilinear interpolation between this case and $r=2$ implies the estimates for the whole range $ 1<r \le2$. This gives the estimates (\ref{21}) - (\ref{37}) for the following choice of parameters for the whole range $1<r \le 2$ :
$$ s = \frac{16}{7r}-\frac{2}{7}+\delta \,,\quad l= \frac{15}{7r} - \frac{8}{7}+\delta \,,\quad a= \frac{1}{r}+ \,,\quad b=\half + \frac{1}{2r}+ \, . $$
Finally, an application of Theorem \ref{Theorem0.3} to the Cauchy problem (\ref{YMF2}),(\ref{YM4}),(\ref{Data-AF}) completes the proof of Theorem \ref{Theorem1.1}.
\end{proof}

\end{document}